\newtheorem{theorem}{Theorem}[section]
\newtheorem{corollary}[theorem]{Corollary}
\newtheorem{lemma}[theorem]{Lemma}
\newtheorem{proposition}[theorem]{Proposition}
\theoremstyle{definition}
\newtheorem{definition}[theorem]{Definition}
\theoremstyle{remark}
\newtheorem{remark}[theorem]{Remark}
\newtheorem{question}[theorem]{Question}
\numberwithin{equation}{section}
\begin{document}

\hspace{1in} \vspace{-.3in}

\title[Cotorsion-free groups from a topological viewpoint]{Cotorsion-free  groups \\ from a topological viewpoint}

\author{Katsuya Eda}

\address{School of Science and Engineering, Waseda University, Tokyo 169-8555, JAPAN}

\email{eda@waseda.jp}

\author{Hanspeter Fischer}

\address{Department of Mathematical Sciences, Ball State University, Muncie, IN 47306, USA}

\email{fischer@math.bsu.edu}

\thanks{}

\subjclass[2000]{20K45; 55Q52, 20K30, 55N10}

\keywords{cotorsion-free group, n-slender group, Spanier group, Griffiths space}

\date{August 30, 2014}

\commby{}

\begin{abstract} We present a characterization of cotorsion-free abelian groups in terms of homomorphisms from fundamental groups of Peano continua, which aligns naturally with the generalization of slenderness to non-abelian groups. In the process, we calculate the first homology group of the Griffiths twin cone.
\end{abstract}

 \maketitle

\section{Introduction and statement of main results}

The purpose of this paper is to establish new connections between two recent developments in ``wild'' algebraic topology
and to provide a new topological perspective on cotorsion-free abelian groups. Specifically, we give a characterization of cotorsion-free abelian groups in terms of homomorphisms from fundamental groups of Peano continua. In the process, we calculate the first homology group of the Griffiths twin cone.

Our results are stated in terms of normal subgroups $\pi({\mathcal U},x)$ of the fundamental group $\pi_1(X,x)$ with respect to open covers $\mathcal U$ of $X$ that first appeared in \cite[\S2.5]{Spanier} and have since come into renewed focus: $\pi({\mathcal U},x)$ is generated by all elements of the form $[\alpha\cdot \beta \cdot \alpha^-]$ with a path $\alpha:([0,1],0)\rightarrow (X,x)$, an open set $U\in {\mathcal U}$ such that $\alpha(1)\in U$,  and a loop $\beta:([0,1],\{0,1\})\rightarrow (U,\alpha(1))$, where ~$\cdot$ denotes path concatenation and $\alpha^-(t)=\alpha(1-t)$ denotes the reverse of the path $\alpha$.

These subgroups have been playing prominent roles in two different contexts: the generalization of slender groups and the generalization of covering spaces.

\subsection{Noncommutatively slender groups} A torsion-free abelian group $A$ is said to be {\em slender} if for every homomorphism $h: \mathbb{Z}^\mathbb{N}\rightarrow A$ there is an $n\in \mathbb{N}$ such that $h((c_k)_{k\in \mathbb{N}})=0$ whenever $c_k=0$ for all $k<n$.
The slender groups form a subclass of the cotorsion-free groups: an abelian group is slender if and only if it is cotorsion-free and contains no subgroup isomorphic to $\mathbb{Z}^\mathbb{N}$ \cite{Nunke}.
Recall that an abelian group $A$ is called {\em cotorsion} provided that whenever $A$ is a subgroup of an abelian group $B$ with $B/A$ torsion-free, we have $B=A\oplus C$ for some subgroup $C$ of $B$. In turn, $A$ is called {\em cotorsion-free} if it does not contain a nonzero cotorsion subgroup.

The concept of slenderness can be generalized to non-abelian groups by replacing  $\mathbb{Z}^\mathbb{N}$ with the fundamental group $\pi_1(\mathbb{H},{\bf o})$ of the Hawaiian Earring $\mathbb{H}$, that is, the  subspace of the Euclidean plane comprised of the union of the circles $C_k=\{(x,y)\in \mathbb{R}^2\mid x^2+(y-1/k)^2=1/k^2\}$ ($k\in \mathbb{N}$) accumulating at the origin ${\bf o}=(0,0)$.
Accordingly, a group $G$ is called  {\em noncommutatively slender} (or {\em n-slender} for short) if for every homomorphism  $h:\pi_1(\mathbb{H},{\bf o})\rightarrow G$ there is an $n\in \mathbb{N}$ such that $h([\gamma])=1$ for all loops $\gamma$ in $\bigcup_{k=n}^\infty C_k$. For example, every free group is n-slender \cite{Griffiths,Higman,MorganMorrison}. An abelian group is n-slender if and only if it is slender~\cite{Eda1992}. In general, we have the following characterization \cite{Eda2005}:  A group $G$ is n-slender if and only if for every Peano continuum $X$ and every homomorphism $h:\pi_1(X,x)\rightarrow G$, there is an open cover ${\mathcal U}$ of $X$ such that $h(\pi({\mathcal U},x))=1$.

The fundamental group $\pi_1(Y,y)$  of a path-connected topological space $Y$ is n\nobreakdash-slender if and only if every homomorphism $h:\pi_1(\mathbb{H},{\bf o})\rightarrow \pi_1(Y,y)$ is induced by a continuous map $f:(\mathbb{H},{\bf o})\rightarrow (Y,y)$ with $h=f_{\#}$ \cite{Eda1992}. The interplay of n-slenderness with free $\sigma$-products, as further investigated in \cite{Eda1998}, forms the foundation for the classification of the homotopy types of one-dimensional spaces by the isomorphism types of their fundamental groups \cite{Eda2010}.

\subsection{Generalized covering spaces}

Let $X$ be a path-connected topological space, $x\in X$, and let $Cov(X)$ denote the collection of all open covers of $X$.  Observe that ${\mathcal S}=\{\pi({\mathcal U},x)\mid {\mathcal U}\in Cov(X)\}$ is a collection of normal subgroups of $\pi_1(X,x)$ which is inversely directed by refinement: if $\mathcal V$ refines $\mathcal U$, then $\pi({\mathcal V},x)\leqslant \pi({\mathcal U},x)$.

We define \[\pi^s(X,x)
=\bigcap_{{\mathcal U}\in Cov(X)} \pi({\mathcal U},x)\] and call this normal subgroup of $\pi_1(X,x)$  the {\em Spanier group} of $X$.
 Provided $X$ is also locally path-connected, $X$ admits a universal covering space if and only if ${\mathcal S}$ contains a minimal element, i.e., $\pi^s(X,x)=\pi({\mathcal U},x)$ for some $\mathcal U$, and it admits a simply-connected covering space if and only if $\pi({\mathcal U},x)=1$ for some $\mathcal U$ \cite{Spanier}.

In general, $\pi^s(X,x)$ lies in the kernel of the natural homomorphism $\pi_1(X,x)\rightarrow \check{\pi}_1(X,x)$ to the first \v{C}ech homotopy group \cite{FZ2007} and it has recently been shown to equal this kernel if $X$ is locally path-connected and paracompact Hausdorff (e.g. if $X$ is a Peano continuum) \cite{BrazasFabel}. This homomorphism is often injective. For example, it is injective for one-dimensional spaces \cite{EdaKawamura1998}, for subsets of surfaces \cite{FZ2005} and for certain trees of manifolds \cite{FG}. Hence, for such $X$, we have $\pi^s(X,x)=1$.

The  Hawaiian Earring is the prototypical example of a Peano continuum that does not admit a universal covering space: $\pi^s(\mathbb{H},{\bf o})=1$, while $\pi({\mathcal U},{\bf o})\not=1$ for all ${\mathcal U}\in Cov(\mathbb{H})$.

It was shown in \cite{FZ2007}, that every path-connected topological space $X$ admits a {\em generalized covering projection} $p:\widetilde{X}\rightarrow X$ corresponding to $\pi^s(X,x)$, i.e.:
 \begin{itemize} \item[(i)] $\widetilde{X}$ is path-connected and locally path-connected; \item[(ii)] $p_\#:\pi_1(\widetilde{X},\widetilde{x})\rightarrow \pi_1(X,x)$ is a monomorphism onto $\pi^s(X,x)$; and \item[(iii)] for every map $f:(Y,y)\rightarrow (X,x)$ from a path-connected and locally path-connected space $Y$ with $f_\#(\pi_1(Y,y))\leqslant p_\#(\pi_1(\widetilde{X},\widetilde{x}))$, there is a {\sl unique} lift $\widetilde{f}:(Y,y)\rightarrow (\widetilde{X},\widetilde{x})$ such that $f=p\circ \widetilde{f}$.\end{itemize}
These properties uniquely characterize the concept, although they do not guarantee evenly covered neighborhoods or homeomorphic fibers. However, the automorphism group of this generalized covering projection is always naturally isomorphic to the quotient $\pi_1(X,x)/\pi^s(X,x)$ and it acts freely and transitively on every fiber. Moreover, $p$ is open if $X$ is locally path-connected \cite{FZ2007}. When $\pi^s(X,x)=1$, we speak of a {\em generalized universal covering}.

In the case of a one-dimensional compact metric space, the resulting generalized universal covering space carries a combinatorial $\mathbb{R}$-tree structure that acts as a {\em generalized} Cayley graph for the fundamental group \cite{FZ2012}. This has given rise to a ``mechanical'' description of the fundamental group of the Menger universal curve in terms of an infinite version of the Towers of Hanoi puzzle \cite{FZ2013}. Generalized universal coverings are also useful in determining the asphericity of other locally non-trivial spaces \cite{F}.

\pagebreak

\subsection{Main results}

\begin{definition} We call a group $G$ {\em homomorphically Hausdorff} relative to a path-connected topological space $X$ if for every homomorphism $h:\pi_1(X,x)\rightarrow G$, we have $\bigcap_{{\mathcal U}\in Cov(X)} h(\pi({\mathcal U},x))=1$.\end{definition}

\begin{remark}  The terminology is motivated by considering $\pi_1(X,x)$ as a topological group with basis $\{g\pi({\mathcal U},x)\mid g\in \pi_1(X,x)$, ${\mathcal U}\in Cov(X)\}$, as is done in \cite[\S3.3]{BDLM}, where this topology is called the {\em lasso topology}. (See also \cite{VZ}.) Given a homomorphism $h:\pi_1(X,x)\rightarrow G$, the image $K=h(\pi_1(X,x))$ is a topological group with basis $\{k h(\pi({\mathcal U},x))\mid k\in K$, ${\mathcal U}\in Cov(X)\}$ and $h:\pi_1(X,x)\rightarrow K$ is a quotient map. Then $\bigcap_{{\mathcal U}\in Cov(X)} h(\pi({\mathcal U},x))=1$ if and only if $K$ is Hausdorff.
 \end{remark}

\begin{definition} We call a group $G$ {\em Spanier-trivial} relative to a path-connected space $X$ if for every homomorphism $h:\pi_1(X,x)\rightarrow G$, we have $h(\pi^s(X,x))=1$.\end{definition}

\begin{remark}  There are some obvious relationships. Every group is Spanier-trivial relative to the Hawaiian Earring $\mathbb{H}$. If $G$ is homomorphically Hausdorff relative to $X$, then $G$ is Spanier-trivial relative to $X$. The fundamental group $\pi_1(X,x)$ is Spanier-trivial relative to $X$ if and only if $\pi^s(X,x)=1$, if and only if $\pi_1(X,x)$ is Hausdorff, in which case $X$ admits a generalized universal covering space and is {\em homotopically Hausdorff}, i.e., no fixed element  $g\in \pi_1(X,x)\setminus\{1\}$ can be represented by arbitrarily small loops.
\end{remark}

The prototypical example of a Peano continuum that is not homotopically Hausdorff is the Griffiths twin cone $C(\mathbb{H}_o)\vee C(\mathbb{H}_e)$: it is defined as the wedge of the two cones $C(\mathbb{H}_o)$ and $C(\mathbb{H}_e)$, over the subsets $\mathbb{H}_o=\bigcup_{k\in \mathbb{N}} C_{2k-1}$ and $\mathbb{H}_e=\bigcup_{k\in \mathbb{N}} C_{2k}$  of the Hawaiian Earring $\mathbb{H}$, respectively, joined at the distinguished points of their bases \cite{Griffiths1954}.
Since every loop in $C(\mathbb{H}_o)\vee C(\mathbb{H}_e)$ can be homotoped arbitrarily closely to the wedge point $\ast$ (see, e.g., \cite[Lemma~2.1]{Eda1991}), we have the following well-known fact (cf. \cite[\S2.5 Example 18]{Spanier}): $\pi^s(C(\mathbb{H}_o)\vee C(\mathbb{H}_e),\ast)=\pi_1(C(\mathbb{H}_o)\vee C(\mathbb{H}_e),\ast)\not=1$.

In particular, if a group $G$ is Spanier-trivial relative to $C(\mathbb{H}_o)\vee C(\mathbb{H}_e)$, then there is no nontrivial homomorphism $h:\pi_1(C(\mathbb{H}_o)\vee C(\mathbb{H}_e),\ast)\rightarrow G$, so that $G$ is also homomorphically Hausdorff relative to $C(\mathbb{H}_o)\vee C(\mathbb{H}_e)$.\vspace{5pt}

Here are our main results:

\begin{theorem}\label{mainresult}
For an abelain group $A$, the following are equivalent:
\begin{itemize}
\item[(1)] $A$ is cotorsion-free.
\item[(2)] $A$ is homomorphically Hausdorff relative to every Peano continuum.
\item[(3)] $A$ is homomorphically Hausdorff relative to the Hawaiian Earring $\mathbb{H}$.
\item[(4)] $A$ is Spanier-trivial relative to the Griffiths twin cone $C(\mathbb{H}_o)\vee C(\mathbb{H}_e)$.
\end{itemize}
\end{theorem}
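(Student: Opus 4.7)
The plan is to establish the cycle $(1)\Rightarrow(2)\Rightarrow(3)\Rightarrow(1)$ together with the separate equivalence $(1)\Leftrightarrow(4)$. The implication $(2)\Rightarrow(3)$ is immediate since $\mathbb{H}$ is a Peano continuum, and the general remark that homomorphic Hausdorffness relative to $X$ entails Spanier-triviality relative to $X$ (applied to the Griffiths twin cone, which is itself a Peano continuum) yields $(2)\Rightarrow(4)$. So the substantive work lies in $(3)\Rightarrow(1)$, $(1)\Rightarrow(2)$, and $(4)\Rightarrow(1)$.

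I would begin with $(3)\Rightarrow(1)$, by contraposition. If $A$ is not cotorsion-free, then $A$ contains a nonzero subgroup isomorphic to one of $\mathbb{Q}$, $\mathbb{Z}/p\mathbb{Z}$, or $\mathbb{J}_p$, by the classical structure theory of cotorsion groups. In each case, one constructs a homomorphism $h:\pi_1(\mathbb{H},\mathbf{o})\to A$ with nonzero tail intersection $\bigcap_n h(\pi(\mathcal{U}_n,\mathbf{o}))$, where $\mathcal{U}_n$ is the standard cover whose only large set contains $\bigcup_{k\geqslant n}C_k$. The existence of such an $h$ leverages the fact that each of $\mathbb{Q}$, $\mathbb{Z}/p\mathbb{Z}$, and $\mathbb{J}_p$ is itself cotorsion, so homomorphisms defined on suitable subgroups of the abelianization $H_1(\mathbb{H})$ extend to all of $H_1(\mathbb{H})$ via $\mathrm{Ext}$-vanishing, and one can arrange the extension so that every tail image contains a fixed nonzero element of $A$. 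For example, in the case of $\mathbb{Z}/p\mathbb{Z}$, one composes the winding-number abelianization $\pi_1(\mathbb{H},\mathbf{o})\to \mathbb{Z}^{\mathbb{N}}$ with a free-ultrafilter limit $(\mathbb{Z}/p\mathbb{Z})^{\mathbb{N}}\to \mathbb{Z}/p\mathbb{Z}$, yielding a functional that vanishes on finitely supported sequences but surjects onto $\mathbb{Z}/p\mathbb{Z}$ on every tail.

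For $(1)\Rightarrow(2)$, fix cotorsion-free $A$, a Peano continuum $X$, and $h:\pi_1(X,x)\to A$; set $K=\bigcap_{\mathcal{U}\in Cov(X)} h(\pi(\mathcal{U},x))$. The strategy is to show that $K$ is a cotorsion subgroup of $A$, which then forces $K=0$. Fix a decreasing cofinal sequence of open covers $\mathcal{U}_n$ of $X$ with mesh tending to zero. For $a\in K$ and each $n$, write $a=h([\alpha_n\cdot\beta_n\cdot\alpha_n^-])$ with $\beta_n$ a loop in some member of $\mathcal{U}_n$. Concatenating the $\beta_n$'s along the circles of a Hawaiian-earring parametrization yields a continuous map $(\mathbb{H},\mathbf{o})\to (X,x)$, which together with $h$ induces a homomorphism $\bar h:\pi_1(\mathbb{H},\mathbf{o})\to A$ whose tail-subgroup images all contain $a$. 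An algebraic analysis of the tails of $\pi_1(\mathbb{H},\mathbf{o})$, dual to the constructions of the previous paragraph, then shows that any element $a$ realized in this way must generate a cotorsion subgroup of $A$, which I would verify by explicitly exhibiting a homomorphism from $\mathbb{Q}$, $\mathbb{Z}/p\mathbb{Z}$, or $\mathbb{J}_p$ into $A$ sending a generator to $a$.

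Finally, $(4)\Rightarrow(1)$ depends on the homology computation of the Griffiths twin cone announced in the abstract. Once one establishes that $H_1(C(\mathbb{H}_o)\vee C(\mathbb{H}_e))$ admits, as quotients, copies of $\mathbb{Q}$, $\mathbb{Z}/p\mathbb{Z}$, and $\mathbb{J}_p$ for every prime $p$, any abelian group containing a nonzero cotorsion subgroup receives a nontrivial homomorphism from $\pi_1(C(\mathbb{H}_o)\vee C(\mathbb{H}_e))$ via abelianization; since $\pi^s$ coincides with $\pi_1$ for the Griffiths twin cone, this violates (4). I expect the principal obstacle of the proof to be this homology calculation: it requires tracking the identifications imposed by each of the two cones on the infinite reduced words in the free $\sigma$-product description of $\pi_1(\mathbb{H}_o\vee \mathbb{H}_e)$, and then carefully extracting the $\mathbb{Q}$, $\mathbb{Z}/p\mathbb{Z}$, and $\mathbb{J}_p$ quotients in the abelianization simultaneously. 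All four directions of the theorem then reduce, via the Hawaiian-earring analysis sketched above, to this Griffiths computation together with the tail analysis of $\pi_1(\mathbb{H},\mathbf{o})$.
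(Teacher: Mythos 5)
Your decomposition of the equivalences matches the paper exactly, and your approach to (3)$\Rightarrow$(1) and (4)$\Rightarrow$(1) is substantially correct in outline. The genuine gap is in (1)$\Rightarrow$(2). You propose, for $a\in K=\bigcap_{\mathcal U}h(\pi(\mathcal U,x))$, to write $a=h([\alpha_n\cdot\beta_n\cdot\alpha_n^-])$ with $\beta_n$ small and to concatenate the resulting loops along the circles of a Hawaiian-earring parametrization to produce a continuous map $\mathbb H\to X$. This does not work. First, $\pi(\mathcal U_n,x)$ is only \emph{generated} by such conjugates, so in general $a=h\big(\prod_j[\alpha_{n,j}\cdot\beta_{n,j}\cdot\alpha_{n,j}^-]\big)$. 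Second, and fatally, the loops $\alpha_{n,j}\cdot\beta_{n,j}\cdot\alpha_{n,j}^-$ need not shrink to $x$: although $\mathrm{diam}(\beta_{n,j})\to 0$, the conjugating paths $\alpha_{n,j}$ run from $x$ to arbitrary points of $X$, so the conjugated loops stay large. A map $g:\mathbb H\to X$ is continuous only if $g|_{C_k}$ converges uniformly to the basepoint, which fails here. Without that map you have no induced $\bar h:\pi_1(\mathbb H,\mathbf o)\to A$ to analyze, and the reduction collapses. The paper's Theorem~\ref{general} avoids this entirely: it uses a space-filling curve $f:[0,1]\to X$ to build a \emph{single} loop $\ell=g\cdot f^-$ (not a Hawaiian-earring map), where $g$ is a careful modification of $f$ that traverses chosen small loops $i!$ times in the $i$-th stage of an explicitly nested family of covers $\mathcal U_n$; a regrouping of the resulting $1$-chains into cycles shows $(n+1)!\mid h([\ell])-\sum_{i=1}^n i!\,u_ia$ in $A$, yielding a homomorphism $\widehat{\mathbb Z}\to A$ with $\mathbf 1\mapsto a\neq 0$, contradicting cotorsion-freeness because $\widehat{\mathbb Z}$ is algebraically compact. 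The factorial multiplicities and the chain regrouping are the essential ideas your sketch is missing, and they depend on the Peano property in an irreducible way (via the surjection $f$).

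Two secondary remarks. In your (3)$\Rightarrow$(1), the free-ultrafilter map $(\mathbb Z/p\mathbb Z)^{\mathbb N}\to\mathbb Z/p\mathbb Z$ is a valid device for the torsion case, but the ``Ext-vanishing'' you invoke for $\mathbb Q$ and $\mathbb J_p$ is too vague to be a proof; the paper's Proposition~\ref{onto} instead uses the injectivity of $\mathbb Q$ and pure-injectivity of $\mathbb J_p$ to extend $\bigoplus_n\tau$ from $\bigoplus_n\mathbb Z^{\mathbb N}$ to $\prod_n\mathbb Z^{\mathbb N}$, then composes with a diagonal-shuffling bijection $\mathbb N\times\mathbb N\to\mathbb N$ to make the extension surjective on every tail. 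In (4)$\Rightarrow$(1), you correctly identify the Griffiths homology computation as the central obstacle; what you do not outline is that the hard part (Lemmas~\ref{nooverlap}--\ref{sort}) is establishing that $H_1(C(\mathbb H_o)\vee C(\mathbb H_e))$ contains a pure free subgroup of rank $2^{\aleph_0}$, via a normal-form argument in the $\mathbb R$-tree universal cover of $\mathbb H$; combined with torsion-freeness, algebraic compactness, and the divisible part, this pins down the isomorphism type and gives the required quotients $\mathbb Q$, $\mathbb J_p$, $\mathbb Z/p\mathbb Z$.
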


The proof of Theorem~\ref{mainresult} will be presented in three separate sections: ``(1)$\Rightarrow$(2)'' in \S\ref{ncot} (Theorem~\ref{general}), ``(4)$\Rightarrow$ (1)'' in \S\ref{Griffiths} (Corollary~\ref{ST}), and ``(3)$\Rightarrow$(1)'' in \S\ref{hom} (Corollary~\ref{HE}). The main work of \S\ref{Griffiths} goes into proving the following:

\begin{theorem}\label{GH}
$H_1(C(\mathbb{H}_o)\vee C(\mathbb{H}_e))$  is isomorphic to \[\left(\bigoplus_{2^{\aleph_0}}\mathbb{Q}\right) \bigoplus \left(\prod_{p\in \mathbb{P}} A_p\right),\] where $\mathbb{P}$ is the set of all primes and $A_p$ is the $p$-adic completion of $\bigoplus_{2^{\aleph_0}}\mathbb{J}_p$.
\end{theorem}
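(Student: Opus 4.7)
Write $X = C(\mathbb{H}_o) \vee C(\mathbb{H}_e)$. The plan is to realize $H_1(X)$ as a quotient of $H_1(\mathbb{H})$ and then extract the stated structure algebraically.

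Since $\mathbb{H}_o \vee \mathbb{H}_e$ is homeomorphic to the Hawaiian earring $\mathbb{H}$ (by interleaving the circles), the inclusion $i\colon \mathbb{H}_o \vee \mathbb{H}_e \hookrightarrow X$ induces a map $i_*\colon H_1(\mathbb{H}) \to H_1(X)$. First, I would verify that $i_*$ is surjective: by the fact already cited in the excerpt, every loop in $X$ can be homotoped arbitrarily close to $\ast$, and with a bit more care one shows that any such loop is in fact homologous in $X$ to a loop lying in $\mathbb{H}_o \vee \mathbb{H}_e$.

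Second, I would identify the kernel $K := \ker(i_*)$. It certainly contains $H_1(\mathbb{H}_o) + H_1(\mathbb{H}_e)$, because both inclusions $\mathbb{H}_o \hookrightarrow X$ and $\mathbb{H}_e \hookrightarrow X$ factor through the contractible cones $C(\mathbb{H}_o)$ and $C(\mathbb{H}_e)$ respectively. The key technical step is to show this is all of $K$: given a nullhomology in $X$ of a loop $\gamma \subset \mathbb{H}_o \vee \mathbb{H}_e$, decompose the bounding singular $2$-chain according to how it distributes between the two cones and thereby express $[\gamma]$ as a sum of a class from $H_1(\mathbb{H}_o)$ and one from $H_1(\mathbb{H}_e)$.

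Granting the above, $H_1(X) \cong H_1(\mathbb{H}) / \bigl( H_1(\mathbb{H}_o) + H_1(\mathbb{H}_e) \bigr)$. The third and final step is purely algebraic: using the Eda--Kawamura structural description of $H_1(\mathbb{H})$ together with the analogous descriptions of its subgroups $H_1(\mathbb{H}_o)$ and $H_1(\mathbb{H}_e)$, I would identify this quotient with the stated group. The divisible summands $\bigoplus_{2^{\aleph_0}}\mathbb{Q}$ should arise from interleaved infinite words whose divisibility obstructions survive the quotient, while the factor $\prod_p A_p$ should record $p$-adic limits of winding-number data that are visible only in the genuinely alternating regime and not in either sub-Hawaiian-earring separately.

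The main obstacle is the identification $K = H_1(\mathbb{H}_o) + H_1(\mathbb{H}_e)$. Since singular homology of non-CW spaces does not in general satisfy Mayer--Vietoris --- indeed, the two contractible cones $C(\mathbb{H}_o)$ and $C(\mathbb{H}_e)$ would otherwise force $H_1(X)=0$, contradicting even the non-triviality of $\pi_1(X)$ --- this equality requires a direct analysis of singular $2$-chains and how their images trace through the two cones. Once this is in hand, the algebraic identification of the quotient, though intricate, follows established patterns in the computation of $\sigma$-word groups and their $p$-adic completions.
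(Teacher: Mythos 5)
Your starting point is sound and, at a high level, matches the paper: the first homology of the twin cone is computed as a quotient of $H_1(\mathbb{H})$. However, the route to that quotient and the work required to identify it are both different from what you describe, and your third ``purely algebraic'' step conceals the genuine difficulty.

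First, the paper does not analyze singular $2$-chains directly. It applies van Kampen (in Griffiths' form) twice at the level of $\pi_1$ to obtain $\pi_1(X)\cong\pi_1(\mathbb{H})/N_0$, where $N_0$ is the normal closure of $\pi_1(\mathbb{H}_o)\ast\pi_1(\mathbb{H}_e)$, and then abelianizes to get $H_1(X)\cong\pi_1(\mathbb{H})/N_1$ with $N_1=(\pi_1(\mathbb{H}_o)\ast\pi_1(\mathbb{H}_e))\pi_1(\mathbb{H})'$. The identification of the kernel that you propose to extract ``by decomposing the bounding singular $2$-chain according to how it distributes between the two cones'' is exactly the content of Eda's earlier theorem $H_1(\mathbb{H})\cong H_1(\mathbb{H}_o)\oplus H_1(\mathbb{H}_e)\oplus H_1(X)$ (cited as \cite[Theorem~1.2]{Eda1991}), which is itself a nontrivial result; you should be aware that earlier arguments along these lines had a gap (the paper explicitly mentions the gap in \cite[Lemma~4.11]{Eda1992}), so this is not a step one should wave through.

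The more serious gap is in your third step. Knowing $H_1(X)\cong H_1(\mathbb{H})/\bigl(H_1(\mathbb{H}_o)+H_1(\mathbb{H}_e)\bigr)$, even together with the isomorphism type of each group involved, does \emph{not} determine the isomorphism type of the quotient: $H_1(\mathbb{H}_o)$ and $H_1(\mathbb{H}_e)$ are each abstractly isomorphic to $H_1(\mathbb{H})$, and the complementary direct summand depends on the embeddings, not merely on the abstract groups. The paper instead shows that $A=H_1(X)$ satisfies four properties --- torsion-free, algebraically compact, maximal divisible subgroup isomorphic to $\bigoplus_{2^{\aleph_0}}\mathbb{Q}$, and containing a \emph{pure} subgroup isomorphic to $\bigoplus_{2^{\aleph_0}}\mathbb{Z}$ --- and then invokes the classification scheme of \cite{EdaKawamura2000}. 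Of these, the first three follow from cited results, but the fourth is the heart of the matter and is established by a delicate geometric/combinatorial argument (Lemmas~\ref{form} and~\ref{sort}) in the $\mathbb{R}$-tree $\widetilde{\mathbb{H}}$ covering $\mathbb{H}$, proving that a certain explicit $2^{\aleph_0}$-family of interleaved elements is pure in $\pi_1(\mathbb{H})/N_1$. None of this is a routine ``established pattern'' computation; without an argument for purity of that subgroup, the structure theorem cannot be applied and the stated isomorphism cannot be concluded.
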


\begin{remark} Note that Theorem~\ref{GH} is stated in the format of Kaplansky and that $A_p   \cong (\mathbb{J}_p)^\mathbb{N}$ \cite[pp.167--169]{Fuchs}. Moreover, by a theorem due to Balcerzyk (see \cite[VII.42 Exercise 7]{Fuchs}), we have $\big(\bigoplus_{2^{\aleph_0}}\mathbb{Q}\big) \bigoplus \big(\prod_{p\in \mathbb{P}} A_p\big)\cong\mathbb{Z}^\mathbb{N}/\bigoplus_{\mathbb{N}}\mathbb{Z}$.
\end{remark}

Theorem~\ref{mainresult} leaves open some natural questions regarding non-abelian groups:

\begin{question}
(a) Is there a group-theoretic characterization for the class of all groups that are homomorphically Hausdorff relative to every Peano continuum?
(b)  For which classes of groups do we have ``(3)$\Rightarrow$(2)'' or ``(4)$\Rightarrow$(2)''?
\end{question}

We list some non-abelian examples in Section~\ref{hom}.

\section{Some algebraic preliminaries}

In this section, we briefly recall some algebraic preliminaries from \cite{Fuchs} concerning infinite abelian groups.

 The {\em $\mathbb{Z}$-adic completion} of an abelian group $A$ is defined to be the inverse limit
$\displaystyle \widehat{A}=\lim_{\longleftarrow} (A/nA,\pi^m_n,n\in \mathbb{N})$
whose homomorphisms $\pi^m_n:A/mA\rightarrow A/nA$ are given by $\pi^m_n(a+mA)=a+nA$ for $n,m\in \mathbb{N}$ with $n|m$. Here, $nA=\{na\mid a\in A\}$. The kernel of the canonical map $A\rightarrow \widehat{A}$, given by $a\mapsto (a+nA)_{n\in \mathbb{N}}$, is called the {\em first Ulm subgroup} of $A$, and is denoted by $U(A)$. If we restrict $n$ to powers of a fixed prime $p$ in this inverse limit, we obtain the {\em $p$-adic completion} of $A$. For example, the $p$-adic integers $\mathbb{J}_p$ are defined to be the $p$-adic completion of $\mathbb{Z}$:
$\displaystyle \mathbb{J}_p=\lim_{\longleftarrow} \mathbb{Z}/p^k\mathbb{Z}$.

An abelian group $D$ is called {\em divisible} if for all $d\in D$ and all $n\in \mathbb{N}$ we have $n|d$, i.e., $d=nc$ for some $c\in D$.
Every divisible group is a direct sum of groups each isomorphic to $\mathbb{Q}$ or the quasicyclic group of type $p^\infty$ for some prime $p$
  (i.e., the subgroup of the complex multiplicative group $\mathbb{C}^\times$ consisting of all $p^n$-th roots of unity for all $n\geqslant 0$) \cite[(23.1)]{Fuchs}.
An abelian group $D$ is divisible if and only if it has the following property: whenever $D$ is a subgroup of an abelian group $A$, then $A=D\oplus C$ for some subgroup $C$ of $A$.
 Every abelian group $A$ has a maximal divisible subgroup $D$ (which is contained in $U(A)$) and we call $A$ {\em reduced}, if it does not contain a nonzero divisible subgroup \cite[(21.3)]{Fuchs}. It is an elementary exercise to show that the maximal divisible subgroup of a torsion-free abelian group is equal to its first Ulm subgroup $U(A)$, since in this case $U(A)$ itself is divisible.

  A subgroup $A$ of an abelian group $B$ is called {\em pure} if for every $a\in A$, we have $n|a$ in $A$ whenever $n|a$ in $B$. An abelian group $C$ is called {\em algebraically compact} if it has the following property: whenever $C$ is a pure subgroup of an abelian group $A$, then $A=C\oplus D$ for some subgroup $D$ of $A$.
  Clearly, every divisible abelian group is algebraically compact.
 Also, every finite abelian group is algebraically compact \cite[(38.1) and (3.1)]{Fuchs}. Every inverse limit of reduced algebraically compact abelian groups is reduced and algebraically compact \cite[(39.4)]{Fuchs}. In particular, $\widehat{\mathbb{Z}}$ and $\mathbb{J}_p$ are algebraically compact for any prime~$p$.

 An abelian group $C$ is divisible (respectively algebraically compact) if and only if it is {\em injective} (respectively {\em pure-injective}), i.e., every homomorphism $\tau:A\rightarrow C$ defined on a subgroup (respectively pure subgroup) $A$ of an abelian group $B$ extends to a homomorphism $\phi:B\rightarrow C$ \cite[(24.5) and (38.1)]{Fuchs}.

An abelian group is cotorsion if and only if it is the homomorphic image of an algebraically compact abelian group \cite[(54.1)]{Fuchs}. A torsion-free abelian group is cotorsion if and only if it is algebraically compact \cite[(54.5)]{Fuchs}. Every nonzero torsion-free reduced algebraically compact abelian group contains a direct summand isomorphic to $\mathbb{J}_p$
 \cite[(40.4)]{Fuchs}.
We therefore have the following characterization \cite{GoebelWald}:

\begin{theorem}[G\"obel-Wald] \label{goebel} An abelian group $A$ is cotorsion-free if and only if it is torsion-free and contains no subgroups isomorphic to $\mathbb{Q}$ or $\mathbb{J}_p$  for any prime $p$.
\end{theorem}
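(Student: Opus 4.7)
The plan is to synthesize the algebraic facts assembled in this section; no new constructions are required, and the theorem is essentially a repackaging of the preceding discussion.

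For the ``only if'' direction, I would show that each of the three features excluded by the right-hand side --- nonzero torsion, a copy of $\mathbb{Q}$, a copy of some $\mathbb{J}_p$ --- produces a nonzero cotorsion subgroup of $A$. Every finite cyclic group is algebraically compact, hence cotorsion, so any nonzero torsion element of $A$ generates such a subgroup. The group $\mathbb{Q}$ is divisible, hence algebraically compact, hence cotorsion. And each $\mathbb{J}_p$ was recorded above as algebraically compact, hence cotorsion. Thus cotorsion-freeness of $A$ forces all three obstructions to be absent.

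For the ``if'' direction, suppose $A$ is torsion-free and contains neither $\mathbb{Q}$ nor any $\mathbb{J}_p$ as a subgroup, and let $C\leqslant A$ be a nonzero cotorsion subgroup. As a subgroup of the torsion-free group $A$, $C$ is itself torsion-free, so by the criterion ``torsion-free cotorsion $=$ algebraically compact'' cited above, $C$ is algebraically compact. Decompose $C=D\oplus R$, where $D$ is the maximal divisible subgroup of $C$ (a direct summand since divisible groups are injective) and $R$ is reduced. If $D\neq 0$, then $D$ is torsion-free and divisible, hence a direct sum of copies of $\mathbb{Q}$, contributing a copy of $\mathbb{Q}$ inside $A$ --- a contradiction. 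Hence $D=0$, so $C=R$ is a nonzero torsion-free reduced algebraically compact group, which by the structural result quoted above contains a direct summand isomorphic to $\mathbb{J}_p$ for some prime $p$, again contradicting the hypothesis. Therefore no such $C$ exists, and $A$ is cotorsion-free.

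The only mildly delicate point is the reduction step in the ``if'' direction: the algebraic-compactness machinery must be applied inside $C$ rather than inside $A$, and the splitting off of the maximal divisible subgroup must be legitimate. Both issues dissolve immediately, since subgroups of torsion-free groups are torsion-free and the injectivity of divisible groups provides the splitting. I do not expect any substantive obstacle; the entire proof is a matter of citing (54.5) and (40.4) from Fuchs in the correct order.
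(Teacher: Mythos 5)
Your proof is correct and follows the same route the paper implicitly takes: the paper simply lists the relevant facts from Fuchs — finite abelian groups, divisible groups, and $\mathbb{J}_p$ are algebraically compact, hence cotorsion; torsion-free cotorsion equals algebraically compact (54.5); and a nonzero torsion-free reduced algebraically compact group has a $\mathbb{J}_p$ summand (40.4) — and then states ``we therefore have the following characterization,'' leaving exactly the synthesis you carried out to the reader. You have filled in that synthesis accurately, including the step of splitting off the maximal divisible subgroup to land in the hypotheses of (40.4).
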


 An abelian group $A$ is said to be {\em complete modulo the first Ulm subgroup}, or {\em complete mod-U}, if for every sequence $(a_n)_{n\in \mathbb{N}}$ in $A$ with $(n+1)!|(a_{n+1}-a_n)$ for all $n\in \mathbb{N}$, there is an element $a \in A$ such that $(n+1)!|(a-a_n)$ for all $n\in \mathbb{N}$. An abelian group $A$ is algebraically compact if and only if $A$ is complete mod-U and the maximal divisible subgroup of $A$ equals $U(A)$ \cite[Satz 2.2]{DugasGoebel}. Hence, a torsion-free abelian group $A$ is algebraically compact if and only if it is complete mod-U.

\section{Cotorsion-free groups and Peano continua}\label{ncot}

\begin{theorem}\label{general}
Let $A$ be a cotorsion-free abelian group and $X$ a Peano continuum. Then $A$ is homomorphically Hausdorff relative to $X$.
\end{theorem}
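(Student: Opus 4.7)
The plan is to argue by contradiction: assume $a \in \bigcap_{\mathcal{U}\in Cov(X)} h(\pi(\mathcal{U},x))$ is nonzero, and then construct a subgroup of $A$ isomorphic to $\mathbb{Q}$ or to $\mathbb{J}_p$ for some prime $p$, contradicting the G\"obel--Wald characterization (Theorem~\ref{goebel}). Since the Peano continuum $X$ is metrizable, fix a compatible metric and a cofinal refining sequence $\mathcal{U}_1\succ\mathcal{U}_2\succ\cdots$ with $\operatorname{mesh}(\mathcal{U}_n)<1/n$, so that $\bigcap_n h(\pi(\mathcal{U}_n,x))=\bigcap_{\mathcal{U}} h(\pi(\mathcal{U},x))$ contains $a$. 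For each $n$, pick $[\gamma_n]\in\pi(\mathcal{U}_n,x)$ with $h([\gamma_n])=a$ together with a decomposition $\gamma_n\simeq\prod_{i=1}^{k_n}\alpha_{n,i}\cdot\beta_{n,i}\cdot\alpha_{n,i}^{-}$, where each $\beta_{n,i}$ is a loop of diameter $<1/n$ in some $U_{n,i}\in\mathcal{U}_n$.

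The technical core of the argument is to weave these countably many decompositions into a single continuous map $f:(\mathbb{H},\mathbf{o})\to(X,x)$: enumerate the pairs $(n,i)$ so that pairs with large $n$ come later, and place the loop corresponding to $\alpha_{n,i}\cdot\beta_{n,i}\cdot\alpha_{n,i}^{-}$ on the circle $C_{k(n,i)}$ of $\mathbb{H}$, where $k(n,i)\to\infty$ as $n\to\infty$. The shrinking diameters of the $\beta_{n,i}$ give continuity of $f$ at $\mathbf{o}$, provided one can control the long conjugating paths $\alpha_{n,i}$, which may wander far from $x$. Here the hypothesis that $A$ is abelian is essential: since conjugation acts trivially on $A$, one may reroute each $\alpha_{n,i}$ through a fixed small neighborhood of $x$ (using the local path-connectedness of $X$) without altering the value of $h\bigl([\alpha_{n,i}\cdot\beta_{n,i}\cdot\alpha_{n,i}^{-}]\bigr)$, so the circles $C_{k(n,i)}$ can be made geometrically small while still realizing the correct homology classes.

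The induced homomorphism $\phi:=h\circ f_{\#}:\pi_1(\mathbb{H},\mathbf{o})\to A$ will then have the property that, for every $m$, the element $a$ lies in $\phi\bigl(\pi_1(\bigvee_{k\ge m}C_k,\mathbf{o})\bigr)$. Because $A$ is abelian, $\phi$ factors through $H_1(\mathbb{H})$, and the algebraic preliminaries of \S2 take over: an element of a torsion-free group that lies in the image of every tail of $\pi_1(\mathbb{H})$ must be infinitely divisible in $A$ or admit $p$-adic Cauchy approximations by elements of $A$ for some prime $p$. Using completeness mod-$U$ (\cite[Satz~2.2]{DugasGoebel}) and the discussion of pure-injectivity in \S2, this forces $A$ to contain either $\mathbb{Q}$ or $\mathbb{J}_p$, contradicting Theorem~\ref{goebel}.

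The main obstacle is the middle step, namely building the map $f$: the decomposition of $\gamma_n$ as a product of lassos does not a priori produce \emph{small} loops based at $x$, since the paths $\alpha_{n,i}$ may be long. Overcoming this hinges on combining the abelianness of $A$ (which makes conjugation by $\alpha_{n,i}$ invisible to $h$) with the Peano continuum structure of $X$ (which provides the routing of conjugating paths through small neighborhoods of $x$).
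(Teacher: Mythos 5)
Your broad outline matches the paper's: argue by contradiction, exhibit a nonzero element in the image of every $\pi(\mathcal{U},x)$, and use it to build a homomorphism from an algebraically compact group into $A$, contradicting G\"obel--Wald. But the central geometric step is flawed, and the flaw is precisely the one the paper's construction is designed to avoid.

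You propose to build a continuous map $f:(\mathbb{H},\mathbf{o})\to(X,x)$ placing each lasso $\alpha_{n,i}\cdot\beta_{n,i}\cdot\alpha_{n,i}^{-}$ on a circle $C_{k(n,i)}$, with continuity at $\mathbf{o}$ secured by ``rerouting'' the conjugating paths $\alpha_{n,i}$ near $x$. This cannot work. Continuity of $f$ at $\mathbf{o}$ requires the images $f(C_{k(n,i)})$ to shrink to $x$ as $k(n,i)\to\infty$, but each $f(C_{k(n,i)})$ is a loop based at $x$ that must reach $\beta_{n,i}\subseteq U_{n,i}$, and $U_{n,i}$ may be anywhere in $X$ (in the Griffiths twin cone, for instance, the small loops of a lasso decomposition concentrate near the cone tips, far from the wedge point). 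Rerouting $\alpha_{n,i}$ through a small neighborhood of $x$ does not shrink the loop: the path must still leave that neighborhood to get to $U_{n,i}$. Abelianness of $A$ makes conjugation invisible to $h$, which is useful, but it does not make the loop geometrically small, which is what the topology of $\mathbb{H}$ demands. So no such $f$ exists in general, and the reduction to the Hawaiian Earring case collapses.

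The paper avoids this by \emph{not} constructing a map from $\mathbb{H}$. Instead, since $X$ is a Peano continuum, it takes a space-filling surjection $f:[0,1]\to X$ and builds a single loop $\ell=g\cdot f^{-}:[0,1]\to X$ in which the path $g$ alternately follows $f$ forward and, at carefully chosen moments, inserts small loops $\ell_s$ contained in the cover elements $U_s$. Crucially, these $\ell_s$ are based at points $f(a_s)$, not at $x$, so they can live anywhere in $X$; this is allowable precisely because $A$ is abelian, so $h$ factors through $H_1(X)$ and one works with $1$-cycles rather than based loops. Moreover, the paper's loop runs through each $\ell_s$ with $s\in S_i$ exactly $i!$ times and leaves a remainder homologous to $n!$ copies of a cycle; this factorial bookkeeping is what produces the divisibility relation $(n+1)!\mid h([\ell])-\sum_{i=1}^n i!\,u_i a$ and thus a well-defined nonzero homomorphism $\widehat{\mathbb{Z}}\to A$. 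Your sketch omits this weighting entirely, and without it the algebraic endgame (``infinitely divisible, or $p$-adic Cauchy approximations'') remains a heuristic rather than an argument: being in the image of every tail of $\pi_1(\mathbb{H})$ under an abelian-valued homomorphism does not by itself yield the Cauchy structure needed to apply completeness mod-$U$.
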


We use tools from \cite{Eda2014}. For completeness, we give a self-contained proof.

\begin{proof}
Suppose, to the contrary, that there is a homomorphism $h:\pi_1(X,x)\rightarrow A$ and an element $0\neq a\in \bigcap_{{\mathcal U}\in Cov(X)} h(\pi({\mathcal U},x))$. Since $A$ is torsion-free, its first Ulm subgroup $U(A)$ equals the maximal divisible subgroup of $A$. However, $A$ is reduced, so that $U(A)=0$. Consider the $\mathbb{Z}$-adic completion $\widehat{\mathbb{Z}}$ of the integers $\mathbb{Z}$:
\[\widehat{\mathbb{Z}}=\lim_{\longleftarrow}\left(\mathbb{Z}/2!\,\mathbb{Z}\leftarrow \mathbb{Z}/3!\,\mathbb{Z} \leftarrow \mathbb{Z}/4!\,\mathbb{Z}\leftarrow \cdots\right).\]
An element $\widehat{u}\in  \widehat{\mathbb{Z}}$ can be  represented in the form \[\widehat{u}=(u_1+2!\,\mathbb{Z}, u_1+2!\,u_2+3!\,\mathbb{Z}, u_1+2!\,u_2+3!\,u_3+4!\,\mathbb{Z}, \dots),\] which we abbreviate by a formal sum $\sum_{i=1}^\infty i!\,u_i$. This representation is unique if we require that $u_i\in \{0,1,2, \dots, i\}$. While we do not add such infinite sums, we have \[\sum_{i=1}^\infty i!\,u_i=\sum_{i=1}^\infty i!\,v_i\] in $\widehat{\mathbb{Z}}$ if and only if \[(n+1)!\;\mid\; \sum_{i=1}^n i!\,u_i-\sum_{i=1}^n i!\,v_i\] in $\mathbb{Z}$ for all $n\in \mathbb{N}$.

Below, we will show that for each sequence $(u_i)_{i\in\mathbb{N}}$, there is an $[\ell]\in \pi_1(X,x)$ such that \begin{equation}\label{goal} (n+1)!\;\mid\; h([\ell])-\sum_{i=1}^n i!\,u_ia\end{equation}  in $A$ for all $n\in \mathbb{N}$. Since $\bigcap_{n\in \mathbb{N}} (n+1)!A=U(A)=0$, we obtain a well-defined homomorphism $\phi:\widehat{\mathbb{Z}}\rightarrow A$ by the formula \[ \phi\left(\sum_{i=1}^\infty i!\,u_i\right)=h([\ell]).\] Note that $\phi({\bf 1})=a\neq 0$, where ${\bf 1}=\sum_{i=1}^\infty i!\,u_i$ with $u_1=1$ and $u_i=0$ for all $i\geqslant 2$.
Since  $\widehat{\mathbb{Z}}$ is algebraically compact, $\phi(\widehat{\mathbb{Z}})$ is a nonzero cotorsion subgroup of $A$; a contradiction. We now show how to find the loop $\ell$.

Since $X$ is a Peano continuum, there is a continuous surjection $f:[0,1]\rightarrow X$. Let $\mathcal V_1$ be a cover of $X$ by open path-connected subsets of $X$ such that diam$(U)<1$ for every $U\in \mathcal{V}_1$.
Choose $k_1\in \mathbb{N}$ such that for every integer $s_1$ with $0\leqslant s_1<k_1$, there is a $U_{(s_1)}\in {\mathcal V}_1$ with $f([\frac{s_1}{k_1},\frac{s_1+1}{k_1}])\subseteq U_{(s_1)}$. Then ${\mathcal U}_1= \{U_{(s_1)}\mid  0\leqslant s_1<k_1\}$ covers $X$.
Next, consider a collection ${\mathcal V}_2$ of open path-connected subsets of $X$ such that diam$(U)<1/2$ for all $U\in {\mathcal V}_2$ and such that for each integer $s_1$ with $0\leqslant s_1<k_1$, there is a subcollection ${\mathcal V}_2'$ of ${\mathcal V}_2$ with $f([\frac{s_1}{k_1},\frac{s_1+1}{k_1}])\subseteq \bigcup {\mathcal V}_2'\subseteq U_{(s_1)}$. Choose $k_2\in \mathbb{N}$ such that for all integers $s_1$ and $s_2$ with $0\leqslant s_i<k_i$, there is a $U_{(s_1,s_2)}\in {\mathcal V}_2$ with
$f([\frac{s_1}{k_1}+\frac{s_2}{k_1k_2},\frac{s_1}{k_1}+\frac{s_2+1}{k_1k_2}])\subseteq U_{(s_1,s_2)}\subseteq U_{(s_1)}$. Then  ${\mathcal U}_2=\{U_{(s_1,s_2)}\mid 0\leqslant s_i<k_i\}$ covers $X$. Inductively, we find a sequence  of positive integers $k_n$ and open covers ${\mathcal U}_n=\{U_s\mid s\in S_n\}$ of $X$, where  $S_n=\{(s_1,s_2,\cdots, s_n)\mid s_i\in \{0,1,2,\dots,k_i-1\}\}$, with the following properties:
\begin{itemize}
\item[(i)] For every $U\in {\mathcal U}_n$, we have that $U$ is path-connected and diam$(U)<1/n$;
\item[(ii)] For every $s\in S_n$, we have $f([a_s,a_{s^+}])\subseteq U_s$, where $a_s=\sum_{i=1}^ns_i/\prod_{j=1}^i k_j$ and $s^+=(s_1, s_2, \cdots, s_n+1)$;
\item[(iii)] For every $(s_1, s_2, \dots, s_n)\in S_n$, we have $U_{(s_1,s_2,\dots,s_n)}\subseteq U_{(s_1, s_2, \dots, s_{n-1})}$.
\end{itemize}
For each $n\in \mathbb{N}$, we have $u_na\in h(\pi({\mathcal U}_n,x))$. Hence, for each $s\in S_n$, there are continuous paths $\alpha_{s,i}:([0,1],0)\rightarrow (X,x)$, say $1\leqslant i \leqslant r_s$, and continuous loops $\beta_{s,i}:([0,1],\{0,1\})\rightarrow (U_s,\alpha_{s,i}(1))$ (possibly constant) such that $u_na=h(\prod_{s\in S_n} \prod_{i=1}^{r_s} [\alpha_{s,i} \cdot \beta_{s,i}\cdot  \alpha_{s,i}^-])$. (Note that the order of the product does not matter since $A$ is abelian.)
 For each $s\in S_n$, choose paths $\gamma_{s,i}:[0,1]\rightarrow U_s$ with $\gamma_{s,i}(0)=f(a_s)$ and $\gamma_{s,i}(1)=\beta_{s,i}(0)$. Let $\ell_s$ be the loop $\gamma_{s,1}\cdot \beta_{s,1}\cdot \gamma_{s,1}^-\cdot \gamma_{s,2}\cdot \beta_{s,2}\cdot \gamma_{s,2}^-\cdot \;\cdots\; \cdot \gamma_{s,r_s}\cdot \beta_{s,r_s}\cdot\gamma_{s,r_s}^-$. Then $\ell_s$ is a loop in $U_s$ based at $f(a_s)$.  Let $\psi:\pi_1(X,x)\rightarrow H_1(X)$ denote the Hurewicz homomorphism. Since $A$ is abelian, we have a homomorphism $\widetilde{h}:H_1(X)\rightarrow A$ with $h=\widetilde{h}\circ \psi$. Let $\lfloor \ell_s\rceil\in H_1(X)$ denote the element represented by the cycle $\ell_s$. Then $u_na=h(\prod_{s\in S_n} \prod_{i=1}^{r_s} [\alpha_{s,i} \cdot \beta_{s,i} \cdot \alpha_{s,i}^-])=\widetilde{h}(\sum_{s\in S_n} \lfloor \ell_s\rceil )$.

We will now define a continuous path $g:[0,1]\rightarrow X$ from $g(0)=f(0)$ to $g(1)=f(1)$ so that the loop $\ell=g\cdot f^{-}$ has the following property: for each $n\in \mathbb{N}$,  $\ell$ runs precisely $i!$ many times through each loop $\ell_s$ with $s\in S_i$ and $1\leqslant i \leqslant n$, and the sum of the remaining subpaths of $\ell$ is homologous to $n!$ copies of the same cycle.
Specifically, put $T_n=\{(t_1, t_2, \dots, t_n)\mid 0\leqslant t_i < ik_i\}$ and $C_n=\{(c_1, c_2, \dots, c_n)\mid 0\leqslant c_i < i\}$. For each $t=(t_1, t_2, \dots, t_n)\in T_n$, let $s(t)=(s_1, s_2, \dots, s_n)\in S_n$ and $c(t)=(c_1, c_2, \dots,c_n)\in C_n$ be defined by the equation \[t_i=is_i+c_i\] and put \[b_t= \displaystyle \sum_{i=1}^n\frac{1+(3i-1)s_i+3c_i}{\prod_{j=1}^i (3j-1)k_j}.\]
Put $\epsilon_n=1/\prod_{i=1}^n (3i-1)k_i$. If we order the elements of $T=\bigcup_{n=1}^\infty T_n$ lexicographically, then the assignment $T\rightarrow [0,1]$ given by $t\mapsto b_t$ is strictly increasing.  Moreover, for each $n\in\mathbb{N}$ and each $t\in T_n$ with $c(t)=(c_1, c_2, \dots, c_n)$:
\begin{itemize}
\item[(i)] We have $(b_t-\epsilon_n,b_t)\cap \{b_{t'}\mid t'\in T\}=\emptyset$ and we define \[g|_{[b_t-\epsilon_n,b_t]}\equiv l_{s(t)}.\]
\item[(ii)] If $c_n<n-1$, we have $b_{t^+}=b_t+3\epsilon_n$ and $[b_t+\epsilon_n, b_t+2\epsilon_n]\cap \{b_{t'}\mid t'\in T\}=\emptyset$, and we define \[g|_{[b_t+\epsilon_n, b_t+2\epsilon_n]}\equiv (f|_{[a_{s(t)}, a_{s(t)^+}]})^-.\]
\end{itemize}
Since the loop $l_s$ is based at $f(a_s)$, $g$ is well-defined. On one hand, if $x\in [0,1]$ is such that $g(x)$ is not defined, then $x\in \{1\}\cup\bigcap_{n\in \mathbb{N}}\bigcup_{t\in T_n} (b_t,b_t+\epsilon_n)$. On the other hand, for every $n\in \mathbb{N}$, every $t\in T_n$ and every $x\in [b_t,b_t+\epsilon_n]$ such that $g(x)$ is defined, we have $g(x)\in U_{s(t)}$. Hence, $g$ uniquely extends to a continuous function $g:[0,1]\rightarrow X$ with $g(0)=f(0)$ and $g(1)=f(1)$.

Now fix $n\in \mathbb{N}$. We wish to decompose the homology cycle $\ell$ into appropriate 1-chains and rearrange them into smaller cycles.
For each $t\in T_m$ with $1\leqslant m \leqslant n$, clearly  $g|_{[b_t-\epsilon_m,b_t]}\equiv \ell_{s(t)}$ is a cycle itself. This leaves us with the 1-chains $g|_{[b_t, b_t+\epsilon_n]}$ for $t\in T_n$, the 1-chains $g|_{[b_t+\epsilon_m,b_t+2\epsilon_m]}=(f|_{[a_{s(t)}, a_{s(t)^+}]})^-$ for $t\in \bigcup_{m=2}^n T_m$ where $c(t)=(c_1, c_2, \dots, c_m)$ and $c_m<m-1$, and the second half of the loop $\ell=g\cdot f^-$. If $t\in T_n$ and $c(t)=(c_1, c_2, \dots, c_n)$ with $c_n<n-1$, we see that $g|_{[b_t, b_t+2\epsilon_n]}\equiv g|_{[b_t, b_t+\epsilon_n]} \cdot (f|_{[a_{s(t)},a_{s(t)^+}]})^-$ is trivially a cycle. In general, however, we need to regroup these 1-chains, which we do next.

 For each $t\in T_n$, define a sequence $t^\ast$ as follows: First, express $t=(t_1, t_2, \dots, t_n)$ and $c(t)=(c_1, c_2, \dots, c_n)$.   If $c_i=i-1$ for all $1\leqslant i \leqslant n$, we let $t^\ast=()$ be the empty sequence; otherwise, there is a unique $m\geqslant 2$ with $c_m<m-1$ and $c_i=i-1$ for all $m< i \leqslant n$, in which case we put $t^\ast=(t_1, t_2, \dots, t_m)$. (Note that $t^\ast=t$ corresponds to the case $c_n<n-1$, which gave us the trivial grouping above.)

Observe that for $t,\tilde{t}\in T_n$ with $t^\ast=\tilde{t}^\ast$ and $t\neq \tilde{t}$, we have $s(t)\neq s(\tilde{t})$. More precisely,  we have a bijection  \begin{equation} \label{bij} \{t\in T_n\mid t^\ast=()\;\}\rightarrow S_n: t\mapsto s(t), \end{equation} and
if we put $T'_m=\{t\in T_m\mid c_m<m-1$, where $c(t)=(c_1, c_2,\dots, c_m)\}$, then for every $u\in \bigcup_{m=2}^n T_m'$, we have a bijection
\begin{equation} \label{bijm} \{t\in T_n\mid t^\ast=u\}\rightarrow \{s\in S_n\mid a_{s(u)}\leqslant a_s< a_{s(u)^+}\}: t\mapsto s(t).\end{equation}

The correspondence (\ref{bij}) allows us to subdivide the second half of $\ell=g\cdot f^-$ and group each piece $(f|_{[a_s,a_{s^+}]})^-$ (for $s\in S_n$) with the 1-chain $g|_{[b_t, b_t+\epsilon_n]}$ where $t\in T_n$, $t^\ast=()$ and $s(t)=s$.
Similarly, the correspondence (\ref{bijm}) allows us to subdivide every $g|_{[b_u+\epsilon_m, b_u+2\epsilon_m]}\equiv (f|_{[a_{s(u)}, a_{s(u)^+}]})^-$
for which $u\in \bigcup_{m=2}^n T_m'$ and group each piece $(f|_{[a_s,a_{s^+}]})^-$ (for $s\in S_n$, $a_{s(u)}\leqslant a_s< a_{s(u)^+}$) with the 1-chain $g|_{[b_t, b_t+\epsilon_n]}$ where $t\in T_n$, $t^\ast=u$ and $s(t)=s$.

Now, for every $s\in S_n$, we have $|\{t\in T_n\mid s(t)=s\}|=n!$ and for all $t,\tilde{t}\in T_n$ with $s(t)=s(\tilde{t})$, we have $g|_{[b_t, b_t+\epsilon_n]}\equiv g|_{[b_{\tilde{t}}, b_{\tilde{t}}+\epsilon_n]}$. Hence,
  \[\lfloor g\cdot f^-\rceil=\sum_{i=1}^{n} i!\sum_{s\in S_i}\lfloor \ell_s\rceil +n!\sum_{s\in S_n}\lfloor \delta_s\rceil= \sum_{i=1}^{n-1} i!\sum_{s\in S_i}\lfloor \ell_s\rceil +n!\left(\sum_{s\in S_n}\lfloor \ell_s\rceil+\lfloor \delta_s\rceil\right)\]
  in $H_1(X)$, where $\delta_s\equiv g|_{[b_t, b_t+\epsilon_n]}\cdot (f|_{[a_s, a_s^+]})^-$ for $t\in T_n$ with $t^\ast=()$ and $s(t)=s$. Applying $\widetilde{h}$ yields (\ref{goal}).
\end{proof}

\section{The Griffiths twin cone}\label{Griffiths}

 In this section, we calculate the first integral homology group of the Griffiths twin cone. Our approach is based on \cite[\S4]{Eda1992}. However, there is a gap in the proof of \cite[Lemma~4.11]{Eda1992}, which is addressed in \cite{Eda2014}. Lemma~\ref{form} below generalizes the corresponding adjustment of \cite[Lemma~4.11]{Eda1992} from \cite{Eda2014} to the situation at hand. In keeping with a geometric perspective, the proofs in this section are framed in terms of the generalized universal covering  of the Hawaiian Earring, treating infinitary words implicitly.

Two applications of van Kampen's Theorem (each time cutting off one simply-connected cone tip) yield $\pi_1(C(\mathbb{H}_o)\vee C(\mathbb{H}_e))\cong \pi_1(\mathbb{H})/N_0$, where $N_0$ is the normal subgroup of $\pi_1(\mathbb{H})$ generated by $\pi_1(\mathbb{H}_o)\ast \pi_1(\mathbb{H}_e)$ \cite[\S3]{Griffiths1954}; we take the origin ${\bf o}$ as the base point for $\mathbb{H}$. Hence, $H_1(C(\mathbb{H}_o)\vee C(\mathbb{H}_e)) \cong \pi_1(\mathbb{H})/N_1$, where $N_1=(\pi_1(\mathbb{H}_o)\ast \pi_1(\mathbb{H}_e))\pi_1(\mathbb{H})'$ and $\pi_1(\mathbb{H})'$ denotes the commutator subgroup of $\pi_1(\mathbb{H})$.

Consider the generalized universal covering $p:\widetilde{\mathbb{H}}\rightarrow \mathbb{H}$. Then $\widetilde{\mathbb{H}}$ is an $\mathbb{R}$-tree \cite[Example~4.14]{FZ2007}, i.e., $\widetilde{\mathbb{H}}$ is a uniquely arcwise connected geodesic metric space. (Recall that an isometric embedding of a compact interval of the real line into a metric space is called a {\em geodesic}. A {\em geodesic metric space} is a metric space in which every pair of points is connected by a geodesic.) In particular, $\widetilde{\mathbb{H}}$ is simply-connected.
Moreover, for every $\widetilde{x}\in p^{-1}({\bf o})\subseteq \widetilde{\mathbb{H}}$  and every $b\in \pi_1(\mathbb{H})$
there is a unique $\widetilde{y}\in p^{-1}({\bf o})$
such that the  arc $[\widetilde{x},\widetilde{y}]$ in $\widetilde{\mathbb{H}}$
from $\widetilde{x}$ to $\widetilde{y}$, when projected by $p:\widetilde{\mathbb{H}}\rightarrow \mathbb{H}$, is a loop representing $b$.

\begin{lemma}\label{nooverlap}
  Given arcs $[\widetilde{x}_1,\widetilde{y}_1], [\widetilde{x}_2,\widetilde{y}_2]\subseteq \widetilde{\mathbb{H}}$  whose projections represent the same element of $\pi_1(\mathbb{H})$, there is a homeomorphism $h: \widetilde{\mathbb{H}}\rightarrow \widetilde{\mathbb{H}}$ such that $p\circ h=p$, $h(\widetilde{x}_1)=\widetilde{x}_2$ and $h(\widetilde{y}_1)=\widetilde{y}_2$. If, moreover, $[\widetilde{x}_2,\widetilde{y}_2]\subseteq [\widetilde{x}_1,\widetilde{y}_1]$, then $[\widetilde{x}_2,\widetilde{y}_2]=[\widetilde{x}_1,\widetilde{y}_1]$.
\end{lemma}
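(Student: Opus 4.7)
The plan is to exploit the deck transformation group of the generalized universal covering $p:\widetilde{\mathbb{H}}\to\mathbb{H}$. Since $\pi^s(\mathbb{H},{\bf o})=1$, the automorphism group of $p$ is naturally isomorphic to $\pi_1(\mathbb{H},{\bf o})$ and acts freely and transitively on the fiber $p^{-1}({\bf o})$. A quick application of the uniqueness clause~(iii) of the defining properties of a generalized covering shows, moreover, that this action is free on all of $\widetilde{\mathbb{H}}$: if an automorphism $h$ fixed some point $\widetilde{z}$, then both $h$ and the identity would be lifts through $p$ of the map $p:(\widetilde{\mathbb{H}},\widetilde{z})\to(\mathbb{H},p(\widetilde{z}))$, hence would coincide.

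For the first assertion, I would let $h$ be the unique automorphism of $p$ with $h(\widetilde{x}_1)=\widetilde{x}_2$. Then $h([\widetilde{x}_1,\widetilde{y}_1])$ is an arc from $\widetilde{x}_2$ to $h(\widetilde{y}_1)\in p^{-1}({\bf o})$ whose projection, by $p\circ h=p$, coincides with that of $[\widetilde{x}_1,\widetilde{y}_1]$ and therefore represents $b$. Appealing to the uniqueness of $\widetilde{y}_2$ recalled just before the lemma (as the unique point of $p^{-1}({\bf o})$ whose arc from $\widetilde{x}_2$ projects to a loop representing $b$), we conclude $h(\widetilde{y}_1)=\widetilde{y}_2$.

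For the second assertion, suppose $[\widetilde{x}_2,\widetilde{y}_2]\subseteq[\widetilde{x}_1,\widetilde{y}_1]$. Because arcs in the $\mathbb{R}$-tree $\widetilde{\mathbb{H}}$ are uniquely determined by their endpoints, the homeomorphism $h$ maps $[\widetilde{x}_1,\widetilde{y}_1]$ onto $[\widetilde{x}_2,\widetilde{y}_2]$. Identifying $[\widetilde{x}_1,\widetilde{y}_1]$ with $[0,1]$, the restriction $h|_{[\widetilde{x}_1,\widetilde{y}_1]}$ becomes a continuous self-map of $[0,1]$, which admits a fixed point by the intermediate value theorem applied to $t\mapsto h(t)-t$. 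Freeness of the action then forces $h=\mathrm{id}$, so that $\widetilde{x}_1=\widetilde{x}_2$, $\widetilde{y}_1=\widetilde{y}_2$, and the two arcs coincide.

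The only delicate point I anticipate is the upgrade from the stated freeness on the fiber $p^{-1}({\bf o})$ to freeness on all of $\widetilde{\mathbb{H}}$; once this is in hand, the remainder of the argument is a straightforward combination of basic $\mathbb{R}$-tree geometry, the intermediate value theorem, and the standard uniqueness of lifts.
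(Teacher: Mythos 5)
Your proof is correct, and while the first assertion is handled essentially as in the paper (via deck transformations and uniqueness of the endpoint in the fiber), your argument for the second assertion takes a genuinely different route. The paper first reduces to the case where $h$ preserves the orientation of the arc $[\widetilde{x}_1,\widetilde{y}_1]$ (passing to $h^2$ if necessary, and later invoking torsion-freeness of $\check{\pi}_1(\mathbb{H})$ to recover $h=\mathrm{id}$ from $h^2=\mathrm{id}$); it then iterates $h$, observes that the nondecreasing orbit $h^n(\widetilde{x}_1)$ converges, and uses the continuity of $p$ to conclude that the projected loop $p|_{[\widetilde{x}_1,\widetilde{x}_2]}$ is constant, so that $\widetilde{x}_1=\widetilde{x}_2$ and $h=\mathrm{id}$ by freeness of the deck action on the fiber. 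Your intermediate-value-theorem argument is more direct: it produces a fixed point of $h$ inside $[\widetilde{x}_1,\widetilde{y}_1]$ without any orientation case split or appeal to torsion-freeness, at the cost of needing freeness of the deck action on all of $\widetilde{\mathbb{H}}$ rather than merely on the fiber $p^{-1}(\mathbf{o})$ (since the fixed point furnished by the IVT is generally not in the fiber). You correctly flag this as the delicate step; it does hold, following from unique path lifting (the coincidence set of $h$ and $\mathrm{id}$ is nonempty, closed, and open, hence all of $\widetilde{\mathbb{H}}$), though one should note that invoking the uniqueness clause~(iii) verbatim requires either re-choosing the base point of the generalized covering to be $\widetilde{z}$, or an explicit appeal to unique path lifting, as clause~(iii) as stated fixes a base point in advance. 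In short: the paper's proof trades the IVT for a monotone iteration plus a torsion-freeness argument, while yours trades the orientation bookkeeping for a stronger (but true and standard) freeness statement.
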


\begin{proof} The first part follows from the lifting property of $p:\widetilde{\mathbb{H}}\rightarrow \mathbb{H}$. Considering $h^2$ if necessary, we may assume that $\widetilde{x}_1\leqslant h(\widetilde{x}_1)=\widetilde{x}_2\leqslant\widetilde{y}_2=h(\widetilde{y}_1)\leqslant\widetilde{y}_1$ on $[\widetilde{x}_1,\widetilde{y}_1]$. Then the nondecreasing sequence $\widetilde{x}_1, h(\widetilde{x}_1), h^2(\widetilde{x}_1),h^3(\widetilde{x}_1),\dots$ converges to some $\widetilde{x}\in [\widetilde{x}_1,\widetilde{y}_1]$ and each arc $[h^{i-1}(\widetilde{x}_1),h^{i}(\widetilde{x}_1)]$ projects to the same loop $\alpha$ in $\mathbb{H}$. The continuity of $p|_{[\widetilde{x}_1,\widetilde{y}_1]}$ implies that $\alpha$ is constant.
Thus, $\widetilde{x}_1=\widetilde{x}_2$, so that $h=id$. Consequently, $\widetilde{y}_1=\widetilde{y}_2$. Finally, note that $h^2=id$ implies $h=id$, since the automorphism group of $p:\widetilde{\mathbb{H}}\rightarrow \mathbb{H}$ is isomorphic to $\pi_1(\mathbb{H})$, and hence isomorphic to a subgroup of the torsion-free group $\check{\pi}_1(\mathbb{H})$.
\end{proof}

Suppose we have two elements $a$ and $b$  in  $\pi_1(\mathbb{H})$, represented by the projections of  arcs $[\widetilde{x},\widetilde{y}]$ and $[\widetilde{y},\widetilde{z}]$ in $\widetilde{\mathbb{H}}$, respectively. Then the projection of the concatenation $[\widetilde{x},\widetilde{y}]\cup [\widetilde{y},\widetilde{z}]$ represents the product $ab$, but it is in general not an arc. Since $\widetilde{\mathbb{H}}$ is uniquely arcwise connected, there is a unique
      $\widetilde{t}\in [\widetilde{x},\widetilde{y}]\cap [\widetilde{y},\widetilde{z}]$ such that the arc $[\widetilde{x},\widetilde{z}]$, whose projection also represents $ab$, satisfies $[\widetilde{x},\widetilde{z}]=[\widetilde{x},\widetilde{t}]\cup [\widetilde{t},\widetilde{z}]$. Note that $\widetilde{t}\in p^{-1}({\bf o})$, since  $p|_{\widetilde{\mathbb{H}}\setminus p^{-1}({\bf o})}:\widetilde{\mathbb{H}}\setminus p^{-1}({\bf o})\rightarrow \mathbb{H}\setminus\{{\bf o}\}$ is a local homeomorphism. Thus, we have:

\begin{lemma}\label{tree}
Let $g_1,g_2,\dots, g_s\in \pi_1(\mathbb{H})$ and $f:[a,b]\rightarrow \widetilde{\mathbb{H}}$ a path such that $f|_{[t_{i-1},t_i]}$ is a geodesic with $g_i=[p\circ f|_{[t_{i-1},t_i]}]$ for some subdivision $\{t_0, t_1, \dots, t_s\}$ of $[a,b]$. Then $Im(f)$ is homeomorphic to a finite simplicial tree whose vertices lie in $p^{-1}({\bf o})$. Standard edge cancellation yields a geodesic $\widehat{f}:[\widehat{a},\widehat{b}]\rightarrow \widetilde{\mathbb{H}}$ with $g_1g_2\cdots g_s=[p\circ \widehat{f}]$.
\end{lemma}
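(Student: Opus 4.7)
My approach is induction on $s$, exploiting that $\widetilde{\mathbb{H}}$ is an $\mathbb{R}$-tree and that $p$ is a local homeomorphism off $p^{-1}({\bf o})$. Write $\gamma_i=f|_{[t_{i-1},t_i]}$ and $T_k=Im(f|_{[a,t_k]})$. Since each $g_i$ is a loop based at ${\bf o}$, every endpoint $f(t_i)$ lies in $p^{-1}({\bf o})$. The case $s=1$ is immediate: $T_1=\gamma_1$ is a single geodesic arc with both endpoints in $p^{-1}({\bf o})$, a simplicial tree with two vertices and one edge.

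Assuming inductively that $T_{s-1}$ is a finite simplicial tree whose vertices lie in $p^{-1}({\bf o})$, I attach the geodesic $\gamma_s$, which runs from $v=f(t_{s-1})\in T_{s-1}$ to $u=f(t_s)\in p^{-1}({\bf o})$. Both $T_{s-1}$ and $\gamma_s$ are subtrees of the $\mathbb{R}$-tree $\widetilde{\mathbb{H}}$, so their intersection is a connected subset of $\gamma_s$ containing $v$, hence a subarc $[v,w]\subseteq \gamma_s$. The key claim is $w\in p^{-1}({\bf o})$: either $w$ is already a vertex of $T_{s-1}$, or $w$ lies in the interior of some edge $e$ of $T_{s-1}$, in which case $w$ admits three distinct local directions in $\widetilde{\mathbb{H}}$ (two along $e$ and one along $\gamma_s \setminus [v,w]$), making $w$ a branch point of $\widetilde{\mathbb{H}}$; since $p|_{\widetilde{\mathbb{H}}\setminus p^{-1}({\bf o})}$ is a local homeomorphism onto the $1$-manifold $\mathbb{H}\setminus\{{\bf o}\}$, branch points of $\widetilde{\mathbb{H}}$ cannot lie in $\widetilde{\mathbb{H}}\setminus p^{-1}({\bf o})$, so $w\in p^{-1}({\bf o})$. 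Declaring $w$ a vertex (subdividing $e$ if needed) and attaching the new edge $[w,u]$ with leaf $u$ then promotes $T_s=T_{s-1}\cup [w,u]$ to a finite simplicial tree with all vertices in $p^{-1}({\bf o})$, completing the induction. This branch-point step is the main obstacle; it is a sibling of the cancellation observation made just before the lemma for a pair of geodesic arcs.

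For the second assertion, $f$ now appears as a combinatorial walk on the finite simplicial tree $Im(f)$ from $f(a)$ to $f(b)$. Standard edge cancellation reduces this walk to the unique reduced edge-path between these endpoints, which is the geodesic $\widehat{f}:[\widehat{a},\widehat{b}]\to \widetilde{\mathbb{H}}$ parametrized by arc length; equivalently, $\widehat{f}$ traces the arc $[f(a),f(b)]$ in the $\mathbb{R}$-tree $\widetilde{\mathbb{H}}$, and since $\widetilde{\mathbb{H}}$ is simply connected, $f$ and $\widehat{f}$ are homotopic rel endpoints. Projecting yields a homotopy rel endpoints in $\mathbb{H}$ between $p\circ \widehat{f}$ and $p\circ f=(p\circ \gamma_1)\cdots(p\circ \gamma_s)$; since $f(a),f(b)\in p^{-1}({\bf o})$ both project to ${\bf o}$, these are loops based at ${\bf o}$, and $p\circ f$ represents $g_1g_2\cdots g_s$ by hypothesis. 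Hence $[p\circ \widehat{f}]=g_1g_2\cdots g_s$, as required.
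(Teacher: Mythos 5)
Your proof is correct and follows essentially the same approach the paper has in mind: the lemma is stated immediately after the two-arc observation that the turning point of a concatenation $[\widetilde x,\widetilde y]\cup[\widetilde y,\widetilde z]$ lies in $p^{-1}(\mathbf o)$ (because $p$ is a local homeomorphism off $p^{-1}(\mathbf o)$ onto the $1$-manifold $\mathbb H\setminus\{\mathbf o\}$), and your induction on $s$ simply iterates that exact observation, with the branch-point argument at $w$ being the $s$-arc analogue. The paper leaves this induction implicit; you have spelled it out, and the remaining steps (connectedness of the intersection of two subtrees in an $\mathbb R$-tree, edge cancellation yielding the reduced path which is the geodesic arc $[f(a),f(b)]$, and simple connectivity of $\widetilde{\mathbb H}$ giving a rel-endpoints homotopy that projects to $\mathbb H$) are all sound.
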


 A somewhat more delicate algorithm is necessary if we are given a product $g_1g_2\cdots g_s\in (\pi_1(\mathbb{H}_o)\ast \pi_1(\mathbb{H}_e))\pi_1(\mathbb{H})'$, whose factors $g_i$ either lie in $\pi_1(\mathbb{H}_o)$ or in $\pi_1(\mathbb{H}_e)$, or else are paired by inverses, and wish to end up with a similar pairing structure for the geodesic $\widehat{f}$. This is carried out in the proof of the following lemma, which is a generalization of the corresponding result for $\pi_1(\mathbb{H})'$ in \cite{Eda2014}:

\begin{lemma}\label{form}
Let $g\in(\pi_1(\mathbb{H}_o)\ast \pi_1(\mathbb{H}_e))\pi_1(\mathbb{H})'$. Then every geodesic $f:[a,b]\rightarrow \widetilde{\mathbb{H}}$  with $g=[p\circ f]$ has the following property:

$(\ast)$ There is a subdivision $\{t_0, t_1,\dots,t_s\}$ of $[a,b]$ defining loops $f_i=p\circ f|_{[t_{i-1},t_i]}$ in $\mathbb{H}$ based at $\bf o$, i.e., $g=[f_1][f_2]\cdots [f_s]$, along with a partition $F_o, F_e, C, \overline{C}$ of $\{1,2,\dots,s\}$ and a bijection $\varphi:C\rightarrow \overline{C}$ such that
\begin{itemize}
\item[(i)] for every $i\in F_o$, $f_i$ lies in $\mathbb{H}_o$;
\item[(ii)] for every $i\in F_e$, $f_i$ lies in $\mathbb{H}_e$;
\item[(iii)] for every $i\in C$, $f_{\varphi(i)}\equiv f_i^-$
\end{itemize}

\end{lemma}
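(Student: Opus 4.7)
The plan is to prove Lemma~\ref{form} by starting with an algebraic decomposition of $g$ that already satisfies $(\ast)$ (extracted from the definition of $(\pi_1(\mathbb{H}_o)\ast\pi_1(\mathbb{H}_e))\pi_1(\mathbb{H})'$), lifting it to a piecewise-geodesic path in $\widetilde{\mathbb{H}}$, and iteratively cancelling backtracks while maintaining the pairing structure until only the unique geodesic $f$ remains. This extends the argument for the commutator subgroup $\pi_1(\mathbb{H})'$ given in \cite{Eda2014}.

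Since $g\in(\pi_1(\mathbb{H}_o)\ast\pi_1(\mathbb{H}_e))\pi_1(\mathbb{H})'$, write $g = w_1\cdots w_m\cdot[a_1,b_1]\cdots[a_n,b_n]$ with each $w_j\in\pi_1(\mathbb{H}_o)\cup\pi_1(\mathbb{H}_e)$, and expand each commutator $[a_k,b_k] = a_k b_k a_k^{-1} b_k^{-1}$ into four ordered factors to obtain an initial product $g = g_1\cdots g_{s_0}$ with the natural partition witnessing $(\ast)$: the $w_j$'s go into $F_o$ or $F_e$ by subspace membership, and each commutator contributes two $(C,\overline{C})$ inverse pairs. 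Lift this product to a piecewise-geodesic path $F_0\colon[a,b]\to\widetilde{\mathbb{H}}$ with vertices $x_0=f(a), x_1,\ldots, x_{s_0}=f(b)\in p^{-1}({\bf o})$.

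The key geometric observation is that branch points of the $\mathbb{R}$-tree $\widetilde{\mathbb{H}}$ are precisely the lifts of ${\bf o}$, since $\mathbb{H}$ is locally an arc away from ${\bf o}$ but has infinite valence at ${\bf o}$. Hence, whenever consecutive arcs $[x_{i-1},x_i]$ and $[x_i,x_{i+1}]$ overlap maximally in an arc $[x_i,y]$ with $y\neq x_i$, the point $y$ lies in $p^{-1}({\bf o})$, so we can decompose $g_i = u\cdot v$ and $g_{i+1}= v^{-1}\cdot w$ with $u, v, w$ loops based at ${\bf o}$. A reduction step proceeds as follows: subdivide $g_i$ and $g_{i+1}$ at $y$, mirror the subdivision on the partner of any commutator-paired factor involved (so $g_j=g_i^{-1}$ becomes $v^{-1}\cdot u^{-1}$), re-pair the sub-loops appropriately, and cancel the adjacent $v\cdot v^{-1}$ in the word. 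Four cases from the partition types must be verified: (A) both in $F_o$ (or both in $F_e$): the merged loop $uw$ remains in $\pi_1(\mathbb{H}_o)$ (resp.\ $\pi_1(\mathbb{H}_e)$); (B) one in $F_o$ and the other in $F_e$: vacuous, since $v$ would project to $\mathbb{H}_o\cap\mathbb{H}_e = \{{\bf o}\}$; (C) one in $F_o\cup F_e$ and the other commutator-paired: the sub-loop $v^{\pm 1}$ lies in $\mathbb{H}_o$ or $\mathbb{H}_e$ matching the partner's type and joins $F_o$ or $F_e$, and so does the partner's mirrored sub-loop; (D) both commutator-paired: subdivide both factors and their partners, then re-pair the resulting sub-loops as inverses.

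Each reduction step strictly reduces the tree-length of the piecewise-geodesic path. Since the image tree $T_{F_0}$ is a finite union of geodesic arcs, it has only finitely many branch points at which cancellation can occur, so the process terminates at the unique arc between $x_0$ and $x_{s_0}$, which equals the image of $f$, and yields a subdivision witnessing $(\ast)$. The main obstacle is the careful bookkeeping in cases (C) and (D), where subdivisions of commutator-paired loops must be propagated consistently to partners and sub-loops must be correctly re-classified (either joining $F_o/F_e$ when they lie in $\mathbb{H}_o/\mathbb{H}_e$, or being paired as inverses otherwise). A subtle but manageable secondary issue is verifying termination in a bounded number of steps, which is controlled by the finite-branching structure of $T_{F_0}$.
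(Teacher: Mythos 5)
Your overall strategy matches the paper's: start from a piecewise-geodesic path realizing a decomposition of $g$ that already satisfies $(\ast)$, then iteratively cancel backtracks while propagating subdivisions across paired pieces, and argue that the process terminates at the unique geodesic. The use of branch points of the $\mathbb{R}$-tree to locate divergence points in $p^{-1}(\mathbf{o})$, and the case analysis for re-pairing (your cases (A)--(D)), correspond to the paper's own bookkeeping for $i,j\in F_o\cup F_e\cup C\cup\overline{C}$, including the special case $\varphi(i)=j$.

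The genuine gap is in your termination argument, which you dismiss as a ``secondary'' issue but which is actually the hardest part of the lemma. You offer two reasons for termination: that arc-length strictly decreases at each step, and that $T_{F_0}$ has only finitely many branch points. Neither suffices. Strict decrease of a real-valued quantity does not preclude an infinite sequence of ever-smaller cancellations. And although the initial cancellation point $y$ does lie at a branch point (or vertex) of $T_{F_0}$, the subdivision that must be propagated to a paired piece $f_{\varphi(i)}$ lands at a point of $[x_{\varphi(i)-1},x_{\varphi(i)}]$ that need \emph{not} be a branch point of $T_{F_0}$ --- it is simply the point corresponding to $y$ under the inverse-correspondence. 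Subsequent backtracks can then occur at these newly created vertices, which can cascade further subdivisions, so the finite-branching structure does not a priori bound the number of reduction steps. In fact, the paper devotes the entire second half of its proof to this exact danger: it tracks the lexicographic measure $(r,s)$, isolates the one scenario in which $(r,s)$ fails to drop (when the left-hand cancelled piece is empty but the right-hand one is not and a new backtrack is created on the left), and then rules out indefinite repetition of that scenario via the ``inside/outside/overlapping intervals'' analysis, which derives a contradiction from the forced accumulation $f(a_n)\to f(a_\infty)=f(b_\infty)\leftarrow f(b_n)$. Without an argument at that level of care --- or some equivalent discrete, well-founded measure --- your proof does not establish that the reduction halts.

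A secondary, smaller point: your reduction step cancels only the overlap between the two \emph{adjacent} pieces $[x_{i-1},x_i]$ and $[x_i,x_{i+1}]$, whereas in the $\mathbb{R}$-tree the backtrack at $x_i$ may extend across several subsequent vertices. The paper deliberately chooses $i$ maximal and cancels all the way out to the divergence point $b_1\in(t_{j-1},t_j]$ with $j$ possibly much larger than $i+1$; this is what makes the replacement rule $f_{\varphi(i)}\mapsto \widehat{f}_{i+1}\cdots\widehat{f}_{j-1}\widehat{f}_{(j,1)}\widehat{f}_{(i,1)}^{-}$ well defined. With the purely local version you describe, you would have to justify that the intermediate cancellations do not loop, which again folds back into the unresolved termination issue.
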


\begin{proof} We may assume $g\not=1$.
 Let $\widetilde{x}\in p^{-1}({\bf o})$. Since $g\in(\pi_1(\mathbb{H}_o)\ast \pi_1(\mathbb{H}_e))\pi_1(\mathbb{H})'$,  there exists a  path $f:[a,b]\rightarrow \widetilde{\mathbb{H}}$ with $f(a)=\widetilde{x}$ and $g=[p\circ f]$, satisfying $(\ast)$, such that each $f|_{[t_{i-1},t_i}]$ is a geodesic. It suffices to show that the (unique) geodesic in $\widetilde{\mathbb{H}}$ from $f(a)$ to $f(b)$ also satisfies $(\ast)$. To this end, we let $r$ denote the number of indices $i\in \{1,2,\dots, s-1\}$ for which $f|_{[t_{i-1},t_{i+1}]}$ is not a geodesic. If $r=0$, then $f$ is a geodesic and we are done. Otherwise, we recursively subject $f$ to the transformation described in the following paragraph, which replaces $f$ with $f|_{[a,a_1]\cup[b_1,b]}$ for some $f|_{[a_1,t_i]}\equiv f|_{[t_i,b_1]}^-$, while retaining  property ($\ast$) on a refined  subdivision (which includes $a_1$ and $b_1$), eventually reducing the pair $(r,s)$ in the lexicographical ordering.

Let $i$ be the largest index for which $f|_{[t_{i-1},t_{i+1}]}$ is not a geodesic. Let $a_1\in [t_{i-1},t_i)$ and $b_1\in (t_i,b]$
be the unique points such that the arc $[f(t_{i-1}),f(b)]$ from $f(t_{i-1})$ to $f(b)$ in $\widetilde{\mathbb{H}}$ equals $[f(t_{i-1}),f(a_1)]\cup [f(b_1),f(b)]$. In particular, $f(a_1)=f(b_1)\in p^{-1}({\bf o})$ and $f|_{[a_1,t_i]}\equiv f|_{[t_i,b_1]}^-$. Say, $b_1\in (t_{j-1},t_j]$. Define $f_{(i,1)}=p\circ f|_{[t_{i-1},a_1]}$, $f_{(i,2)}=p\circ f|_{[a_1,t_i]}$, $f_{(j,1)}=p\circ f|_{[t_{j-1},b_1]}$, and $f_{(j,2)}=p\circ f|_{[b_1,t_j]}$. Then $f_i=f_{(i,1)}f_{(i,2)}$, $f_j=f_{(j,1)}f_{(j,2)}$, and $f_{(i,2)}=(f_{i+1}f_{i+2}\cdots f_{j-1} f_{(j,1)})^-$. If $a_1=t_{i-1}$, then $f_{(i,1)}$ is degenerate and we treat it as empty. Likewise, if $b_1= t_j$, we treat $f_{(j,2)}$ as empty.
If $i\in C$, then there is a (unique) subdivision $\eta=\{t_{\varphi(i)-1}<t^\ast_{i+1}<t^\ast_{i+2}<\cdots< t^\ast_j<t_{\varphi(i)}\}$ such that $f(t^\ast_k)=f(t_k)$ for all $i<k<j$ and $f(t^\ast_j)=f(b_1)$. (By Lemma~\ref{nooverlap}, either $\varphi(i)<i$ or $\varphi(i)\geqslant j$.)  If $j\in \overline{C}$, then there is a (unique)  $t^{\ast\ast}_j \in [t_{\varphi^{-1}(j)-1}, t_{\varphi^{-1}(j)})$ such that $f(t^{\ast\ast}_j)=f(b_1)$. Analogous statements hold if $i\in \overline{C}$ or $j\in C$. Provided $\varphi(i)\not= j$, we may thus reduce the domain of $f$ to $[a,a_1]\cup [b_1,b]$ and adjust the concatenation $f_1 f_2\cdots f_s$ to the new subdivision by replacing $f_i$ with $f_{(i,1)}$, replacing $f_{j}$ with $f_{(j,2)}$, then eliminating $f_{i+1}, f_{i+2},\dots,f_{j-1}$, then replacing $f_{\varphi(i)}$ with $\widehat{f}_{i+1}\widehat{f}_{i+2}\cdots \widehat{f}_{j-1}\widehat{f}_{(j,1)}\widehat{f}_{(i,1)}^-$ (where $\widehat{f}_m=p\circ f|\equiv f_m$ for all $m$), and  finally replacing $f_{\varphi^{-1}(j)}$ (which could by now be one of the new $\widehat{f}_{i+1}, \widehat{f}_{i+2}, \dots, \widehat{f}_{j-1}$) with $\widehat{f}_{(j,2)}^-\widehat{f}_{(j,1)}^-$. Observe that if $i\in F_o\cup F_e$, then $f_i, f_{i+1}, \dots, f_{j-1}$ and $f_{(j,1)}$ all lie in $\mathbb{H}_o$ or $\mathbb{H}_e$, so that there is no need to introduce $\widehat{f}_{i+1}\widehat{f}_{i+2}\cdots \widehat{f}_{j-1}\widehat{f}_{(j,1)}\widehat{f}_{(i,1)}^-$. Likewise, if $j\in F_o\cup F_e$, then there is no need to introduce $\widehat{f}_{(j,2)}^-\widehat{f}_{(j,1)}^-$.
If $\varphi(i)=j$,  we instead consider the common subdivision $\eta\cup\{b_1\}$ of $[t_{j-1},t_j]$. While in this case $[t_{j-1},b_1]$ (the domain of $f_{(j,1)}$) might overlap with $[t^\ast_{j-1},t^\ast_j]$ (the domain of $\widehat{f}_{(j,1)}$), the former cannot properly contain the latter by Lemma~\ref{nooverlap}. Consequently, we can transfer the points $\eta\cap [t_{j-1},b_1]$ to $[b_1,t_j]$ along the correspondence of $f_{(j,1)}$ with $\widehat{f}_{(j,1)}$ (repeatedly if necessary) and find a $k\in \{i+1,i+2,\dots,j-1\}$  with which we may adjust $f_1 f_2\cdots f_s$ by replacing $f_i$ with $f_{(i,1)}$, then eliminating $f_{i+1}, f_{i+2},\dots,f_{j-1}$, then  replacing $f_{\varphi(i)}=f_{j}$ with $\widehat{f}_{(k,2)} \widehat{f}_{k+1}\widehat{f}_{k+2}\cdots \widehat{f}_{j-1}\widehat{f}_{i+1}\widehat{f}_{i+2}\cdots \widehat{f}_{k-1}\widehat{f}_{(k,1)}\widehat{f}_{(i,1)}^-$, and finally (if applicable) replacing $f_{\varphi(k)}$ or $f_{\varphi^{-1}(k)}$ (or its new copy) by $\widehat{f}_{(k,2)}^-\widehat{f}_{(k,1)}^-$.
Then $f|_{[a,a_1]\cup[b_1,b]}$ satisfies $(\ast)$ with respect to the new subdivision.

While this transformation potentially increases $s$ by as much as 2, it may decrease~$r$. Specifically, if $f_{(i,1)}$ is not empty, then $r$ decreases by 1. So, let us assume that $f_{(i,1)}$ is empty. If  $f_{(j,2)}$ is also empty, then $r$ either decreases (by 1 or 2) or remains constant, but $s$ definitely decreases. So, assume  that $f_{(j,2)}$ is not empty. If $i=1$, then $r$ decreases from 1 to 0. So, assume  that $i>1$. Now the outcome depends on whether  $f|_{[t_{i-2},t_{i-1}]\cup [b_1,b]}$ is a geodesic or not. If it is, then $r$ decreases by 1. If it is not, then $r$ remains constant and $s$ does not increase. It now suffices to show that this final scenario cannot occur indefinitely, leaving both $r$ and $s$ unchanged, as we iterate the transformation for $f|_{[a,a_1]\cup[b_1,b]}$ and its subdivision. Suppose, to the contrary, that it does.

  We then have sequences $a<\cdots <a_{n+1}<a_n<\cdots <b_n<b_{n+1}<\cdots <b$\linebreak with subdivisions $\xi_n$ of $[a,a_n]\cup [b_n,b]$ into $s$  intervals, each obtained from the previous one by the above transformation, such that $f|_{[a,a_n]\cup [b_n,b]}$ satisfies~$(\ast)$ with pairings $\varphi_n:C_n\rightarrow \overline{C}_n$. In particular, $[a_{n+1},a_n]$ is a subinterval of $\xi_n$ and $f|_{[a_{n+1},a_n]}\equiv f|_{[b_n,b_{n+1}]}^-$.
  Put $a_\infty=\inf\{a_n\mid n\in \mathbb{N}\}$ and $b_\infty=\sup\{b_n\mid n\in \mathbb{N}\}$.

  We will call a subinterval of $\xi_n$ an
{\em inside} interval if it is contained in ${\mathcal I}=[a_\infty,b_\infty]$, an {\em outside} interval if it is contained in ${\mathcal O}=[a,a_\infty]\cup [b_\infty,b]$, and an {\em overlapping} interval otherwise.

Since $\xi_n\cap \left([a,a_{n+1}]\cup [b_{n+1},b]\right)\subseteq \xi_{n+1}$ and since the number of subintervals of $\xi_n$ is equal to $s$ for all $n$, the number of points in $\xi_n\cap {\mathcal O}$ is  nondecreasing and bounded by~$s$. So, we may assume that this number is constant. In particular, an overlapping interval $[u,v]$ of $\xi_n$ cannot be paired by $\varphi_n$ with an outside interval $[u',v']$ of $\xi_n$. (Otherwise, they would be paired subintervals of $\xi_{n+1}, \xi_{n+2}, \cdots, \xi_m$ until $\xi_m\cap(u,v)\not=\emptyset$ for some minimal $m>n$. But then $\xi_m\cap(u',v')\not=\emptyset$, according to our transformation rule, implying that $|\xi_m\cap {\mathcal O}|>|\xi_{m-1}\cap {\mathcal O}|$; a contradiction.)

It also follows that if a lower (respectively upper) overlapping interval persists, for all $n$, then its left (respectively right) endpoint is constant. Therefore, a persistent overlapping lower interval $[u,v_n]$ cannot be paired with an inside interval for infinitely many $n$. (Otherwise, there are $c,d\in [u,a_\infty]$ and $c_m,d_m\in [a_\infty,a_m]\cup [b_m,b_\infty]$ with $f(c_m)=f(c)\not=f(d)=f(d_m)$ for all $m$. However, since $f(a_m)=f(b_m)$ converges to $f(a_\infty)=f(b_\infty)$, so do $f(c_m)$ and $f(d_m)$.) The same is true for a persistent overlapping upper interval $[u_n,v]$. So, we may assume that overlapping intervals are not paired with inside intervals. (Note that an overlapping interval might cease to exist, in which case it will not reappear.) Similarly, we may assume that outside intervals are not paired with inside intervals. In summary, we now assume that inside, outside, and overlapping intervals, if paired, are paired with an interval of the same kind.

We claim that for every point $c_n$ contained in an inside interval of $\xi_n$, there is a point $c_{n+1}$ contained in an inside interval of $\xi_{n+1}$ such that $f(c_n)=f(c_{n+1})$. In order to show this, we may assume that $c_n\in [a_{n+1},a_n]\cup [b_n,b_{n+1}]$. (Otherwise we can take $c_{n+1}=c_n$.) We will use the notation from  our transformation rule for $C=C_n$. Since $f_{(i,1)}$ is empty, but $f_{(j,2)}$ is not, and since the number of subintervals of $\xi_n$ and $\xi_{n+1}$ are equal, we have $j\in C\cup \overline{C}$. Observe that the domain of $f_i$, i.e., $[a_{n+1},a_n]$, is an inside interval of $\xi_n$, while the domain of $f_j$ might be an upper overlapping interval, in which case the two would not be paired by $\varphi$.
 First suppose that $i\in C$.  Then the desired point $c_{n+1}$ can be found in the domain of $f_{\varphi(i)}$. The same is true if $i\in \overline{C}$. Consequently, the only case that needs attention is when $i\in F_o\cup F_e$. Say, $i\in F_o$. If $j>i+1$, then both $f_i$ and $f_{i+1}$ lie in $\mathbb{H}_o$, so that combining their domains will reduce the pair $(r,s)$ to $(r,s-1)$ for $\xi_n$. So, we may assume that $j=i+1$.  In this case, the domain of $f_{(j,1)}$ equals $[b_n,b_{n+1}]$ and $f_i\equiv f_{(j,1)}^-$. Since $|\xi_{n+1}\cap {\mathcal O}|=|\xi_{n}\cap {\mathcal O}|$, the domains of $f_{(j,1)}$ and $\widehat{f}_{(j,1)}$ must both be inside intervals of $\xi_{n+1}$, so that we can find $c_{n+1}$ in $\widehat{f}_{(j,1)}$.

Hence, starting with any two points $c_n$ and $d_n$  in any inside interval of some $\xi_n$  such that $f(c_n)\not= f(d_n)$, we obtain sequences $(c_m)_{m\geqslant n}$ and $(d_m)_{m\geqslant n}$ such that $c_m$ and $d_m$ are contained in (possibly different) inside intervals of $\xi_m$ with $f(c_m)=f(c_n)$ and $f(d_m)=f(d_n)$. This leads to the same contradiction as before.
\end{proof}

\begin{proof}[Proof of Theorem~\ref{GH}]
As in \cite{EdaKawamura2000}, it suffices to show that $H_1(C(\mathbb{H}_o)\vee C(\mathbb{H}_e))$ is (i)~torsion-free; (ii)~algebraically compact; (iii) contains a subgroup isomorphic to $\bigoplus_{2^{\aleph_0}}\mathbb{Q}$; and (iv) contains a pure subgroup isomorphic to $\bigoplus_{2^{\aleph_0}}\mathbb{Z}$.

The fact that $A=H_1(C(\mathbb{H}_o)\vee C(\mathbb{H}_e))$ is torsion-free follows from the formula $H_1(\mathbb{H}) \cong H_1(\mathbb{H}_o)\oplus H_1(\mathbb{H}_e)\oplus A$ \cite[Theorem~1.2] {Eda1991} and the fact that $H_1(\mathbb{H})$ is torsion-free \cite[Corollary~2.2]{EdaKawamura1998}. (Note that the torsion-freeness of $H_1(\mathbb{H})$ also follows from the fact that $\pi_1(\mathbb{H})$ is isomorphic to a subgroup of $\check{\pi}_1(\mathbb{H})$, all of whose finitely generated subgroups are free.)
 It follows from \cite[Corollary~9.2]{BZ} (or by adapting the proof of \cite[Theorem~4.14]{Eda1992}) that the maximal divisible subgroup of $A$ is isomorphic to $\bigoplus_{2^{\aleph_0}}\mathbb{Q}$.
 By \cite[Theorem~1.1]{Eda1991},  $A$ is complete mod-U and hence algebraically compact. Therefore,  Lemma~\ref{sort} below completes the proof.
\end{proof}

\begin{lemma}\label{sort}
$H_1(C(\mathbb{H}_o)\vee C(\mathbb{H}_e))$ contains a pure subgroup isomorphic to $\bigoplus_{2^{\aleph_0}}\mathbb{Z}$.
\end{lemma}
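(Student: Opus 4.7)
The plan is to exhibit, for each $\alpha$ in an almost disjoint family $\mathcal F=\{X_\alpha:\alpha<2^{\aleph_0}\}\subseteq[\mathbb N]^\omega$, a loop $\ell_\alpha$ in $\mathbb H$ whose classes $\{[\ell_\alpha]\}\subseteq A:=\pi_1(\mathbb H)/N_1$ generate a pure free abelian subgroup of rank $2^{\aleph_0}$. Writing $X_\alpha=\{n_1^\alpha<n_2^\alpha<\cdots\}$ and letting $a_k\in\pi_1(\mathbb H,\mathbf o)$ denote the standard positive generator corresponding to $C_k$, I define
\[\ell_\alpha\equiv w_{n_1^\alpha}\cdot w_{n_2^\alpha}\cdot w_{n_3^\alpha}\cdots,\quad\text{where}\ w_n:=a_{2n-1}\cdot a_{2n}.\]
Since $\mathrm{diam}(w_n)\to 0$, this infinite concatenation is a well-defined continuous loop in $\mathbb H$ based at $\mathbf o$.

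The key invariant is a signed \emph{$w_n$-count} $c_n:\pi_1(\mathbb H,\mathbf o)\to\mathbb Z$, computed from the reduced infinitary word of an element as the number of occurrences of the sub-word $a_{2n-1}\cdot a_{2n}$ minus the number of occurrences of $a_{2n}^{-1}\cdot a_{2n-1}^{-1}$. One checks that $c_n$ is additive under concatenation of loops whose joined lift in $\widetilde{\mathbb H}$ meets at a vertex in $p^{-1}(\mathbf o)$ without further tree-geodesic cancellation, that $c_n$ vanishes on loops contained entirely in $\mathbb H_o$ or in $\mathbb H_e$ (which lack $a_{2n}^{\pm 1}$, respectively $a_{2n-1}^{\pm 1}$), and that $c_n(\ell^-)=-c_n(\ell)$. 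Consequently, if $g\in N_1$ has a geodesic representative with subdivision $f_1,\dots,f_s$ and partition $F_o,F_e,C,\overline C$ supplied by Lemma~\ref{form}, then $c_n(g)=\sum_j c_n(f_j)=0$: pieces in $F_o\cup F_e$ contribute $0$, and each inverse pair $(f_j,f_{\varphi(j)})$ contributes $c_n(f_j)+c_n(f_j^-)=0$.

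For $\mathbb{Z}$-linear independence, suppose $g:=\prod_{i=1}^r\ell_{\alpha_i}^{m_i}\in N_1$. For any fixed $i_0$, almost-disjointness lets us choose $k$ large enough that $n^*:=n_k^{\alpha_{i_0}}$ belongs to no $X_{\alpha_j}$ with $j\neq i_0$; then only $\ell_{\alpha_{i_0}}^{m_{i_0}}$ contributes $w_{n^*}$-patterns to $g$, giving $c_{n^*}(g)=m_{i_0}$. Combined with $c_{n^*}(g)=0$, this forces $m_{i_0}=0$. For purity, if $\sum_i m_i[\ell_{\alpha_i}]=n[\eta]$ in $A$, then $g:=\eta^{-n}\prod_i\ell_{\alpha_i}^{m_i}\in N_1$, and an analogous choice of $n^*=n_k^{\alpha_i}$ for $k$ large yields $c_{n^*}(g)=m_i-n\,c_{n^*}(\eta)=0$, so $n\mid m_i$.

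The main obstacle is establishing the additivity of $c_n$ in the presence of the delicate tree-geodesic reductions in $\widetilde{\mathbb H}$: concatenations of infinitary loops in $\pi_1(\mathbb H)$ can in principle cause cancellations that rearrange $w_n$-patterns. The key geometric observation, extracted from the $\mathbb R$-tree structure of $\widetilde{\mathbb H}$, is that at each join in $p^{-1}(\mathbf o)$ only ``opposite'' atoms $a_k^{\pm 1}$ cancel, and each such elementary cancellation either pairs off a $w_n$-pattern with its inverse (preserving $c_n$) or does not involve a $w_n$-pattern at all. This local stability allows $c_n$ to be tracked globally across the finite subdivision supplied by Lemma~\ref{form} and across the infinite-product structure of $\ell_\alpha$.
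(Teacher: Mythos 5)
Your construction is essentially the one in the paper: the same almost-disjoint family, the same loops $\ell_\alpha$ running through odd--even circle pairs, the same reliance on Lemma~\ref{form} to kill an invariant on $N_1$, and the same use of almost-disjointness for both independence and purity. The difference is that you count occurrences of the \emph{finite} block $a_{2n-1}a_{2n}$ (your $c_n$), whereas the paper counts subarcs of the geodesic lift whose projection is the \emph{infinite} tail $\ell_i\ell_{i+1}\ell_{i+2}\cdots$ (the counts $\kappa_i^\pm$). The infinite-tail invariant is deliberately more robust: Lemma~\ref{nooverlap} shows two such arcs cannot overlap, which immediately gives finiteness and controls boundary behavior for large $i$. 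The paper also first establishes purity of a single generator via Lemma~\ref{tree}, then reduces the general case to the single one by applying retraction-induced endomorphisms $q_{i\#}$ that annihilate all but one factor; this avoids having to analyze the reduced infinitary word of a mixed product $\prod_i\ell_{\alpha_i}^{m_i}$ at all.

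There is a genuine gap in your argument, and it is exactly at the point you flagged as the ``main obstacle.'' Your stated key observation---that at each join only opposite atoms cancel and ``each such elementary cancellation either pairs off a $w_n$-pattern with its inverse or does not involve a $w_n$-pattern at all''---is false. If one word ends with $\cdots a_{2n-1}a_k$ and the next begins with $a_k^{-1}a_{2n}\cdots$, the cancellation $a_k a_k^{-1}\mapsto 1$ involves no $w_n$-pattern on either side yet creates a fresh $a_{2n-1}a_{2n}$ in the reduced product, so $c_n$ is \emph{not} additive under concatenation. Moreover, this claimed additivity is both wrong and unnecessary for the half of your argument where you invoke Lemma~\ref{form}: there $f$ is already a geodesic, so the reduced word for $g$ is literally the cancellation-free concatenation $W_1\cdots W_s$ of the pieces, and the real issue is not mid-word cancellation but $w_n$-patterns that straddle a subdivision point $t_j$---a pattern whose first atom is the terminal atom of $W_j$ and whose second is the initial atom of $W_{j+1}$. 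Such straddling patterns genuinely occur and are missed by $\sum_j c_n(W_j)$. They can be excluded only for $n$ sufficiently large (there are finitely many subdivision points, each with at most one relevant terminal/initial atom), and this ``for sufficiently large $n$'' caveat, which also appears in the paper's handling of $\kappa_i^\pm$, must be made explicit and then propagated through your choice of $n^*$. Separately, your computation $c_{n^*}(g)=m_{i_0}$ for $g=\prod_i\ell_{\alpha_i}^{m_i}$ presupposes that the reduced word of this product retains the $w_{n^*}$-blocks of $\ell_{\alpha_{i_0}}^{m_{i_0}}$ intact; the inter-factor cancellations here are bounded by the almost-disjointness, but you do not argue this, and this is precisely what the paper's retraction trick is designed to bypass. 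The architecture of your proof is sound, but the additivity paragraph needs to be replaced by the straddling/large-$n$ analysis, and the product computation needs either a direct cancellation estimate or the retraction reduction.
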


\begin{proof}
Consider the element $a\in \pi_1(\mathbb{H})$  represented by  $\ell_1\ell_2\ell_3\cdots=\ell:[0,1]\rightarrow \mathbb{H}$, where $\ell_i=\ell|_{[(i-1)/i,i/(i+1)]}$ canonically winds once around the circle $C_i$ of $\mathbb{H}$ for each $i$.
We first show that $aN_1$ generates a non-trivial pure subgroup of $\pi_1(\mathbb{H})/N_1$.
To this end, suppose that $a^m=b^nc$ for some $b\in \pi_1(\mathbb{H})$, $c\in N_1$, $m\geqslant 1$ and $n\geqslant 0$. We wish to show that $n>0$ and $n|m$. Since $\pi_1(\mathbb{H})/N_1$ is torsion-free, it suffices to show this for $m=1$. (For the general case, express $d=gcd(m,n)$ as $d=\alpha m + \beta n$ with $\alpha, \beta\in \mathbb{Z}$ and write $m=m_0d$, $n=n_0d$. Then $a^{m_0}=b^{n_0}c_0$ for some $c_0\in N_1$. Hence, $a=(b^\alpha a^\beta)^{n_0}c_o^\alpha$, implying $n_0=1$ and $n|m$.)

Let $[\widetilde{x}_a,\widetilde{y}_a], [\widetilde{x}_{b},\widetilde{y}_{b}]$ and $[\widetilde{x}_c,\widetilde{y}_c]$ be arcs of $\widetilde{\mathbb{H}}$ whose projections represent $a, b$ and $c$, respectively.
Let $\kappa_i^+(c)$, respectively $\kappa_i^-(c)$, denote the number of subarcs $[\widetilde{x},\widetilde{y}]$ of $[\widetilde{x}_c,\widetilde{y}_c]$ which project to $\ell_i\ell_{i+1}\ell_{i+2}\cdots$, respectively $(\ell_i\ell_{i+1}\ell_{i+2}\cdots)^-$. (Observe that two such arcs cannot overlap. Hence, as in the proof of Lemma~\ref{nooverlap}, this number is finite.) Analogously, we define $\kappa_i^\pm(b)$ and $\kappa_i^\pm(a)$. Applying  Lemma~\ref{tree} to $c=b^{-n}a$, we obtain $\kappa_i^+(c)-\kappa_i^-(c)=1-n\left(\kappa_i^+(b)-\kappa_i^-(b)\right)$, for sufficiently large $i$. However, by Lemma~\ref{form}, $\kappa_i^+(c)-\kappa_i^-(c)=0$, for sufficiently large $i$. Hence $n=1$.

We now vary this construction. Choose a collection $\{I_{\alpha}\mid \alpha\in\{1,2\}^\mathbb{N}\}$ of infinite subsets of $\mathbb{N}$ such that $I_\alpha\cap I_\beta$ is finite for all $\alpha\neq \beta$; e.g., for $\alpha=(s_k)_{k\in\mathbb{N}}$, take $I_{\alpha}=\{\sum_{k=1}^n s_k2^{n-k}\mid n\in \mathbb{N}\}$. Let $a_\alpha\in \pi_1(\mathbb{H})$ be the element represented by the loop $\ell_{2i_1-1}\ell_{2i_1}\ell_{2i_2-1}\ell_{2i_2}\ell_{2i_3-1}\ell_{2i_3}\cdots$, where $i_1<i_2<i_3<\cdots$ is an enumeration of $I_\alpha$. Then the set $\{a_\alpha N_1\mid \alpha\in \{1,2\}^\mathbb{N}\}$ is linearly independent in $\pi_1(\mathbb{H})/N_1$. Indeed, suppose $a_{\alpha_1}^{m_1}a_{\alpha_2}^{m_2}\cdots a_{\alpha_k}^{m_k}N_1=N_1$ for some $\alpha_i\in \{1,2\}^\mathbb{N}$ and $m_i\in \mathbb{Z}$. Choose $M\in \mathbb{N}$  so that $(I_{\alpha_i}\setminus\{1,2,\cdots,M\})\cap (I_{\alpha_j}\setminus\{1,2,\cdots,M\})=\emptyset$ for all $i\neq j$. Put $J_i=I_{\alpha_i}\setminus\{1,2,\cdots,M\}$ and let $q_i:\mathbb{H}\rightarrow \bigcup_{t\in J_i} C_t\subseteq \mathbb{H}$ denote  retraction  with $q_i(x)={\bf o}$ for $x\not\in \bigcup_{t\in J_i} C_t$. Then $q_i$ induces a homomorphism $q_{i\#}:\pi_1(\mathbb{H})/N_1\rightarrow \pi_1(\mathbb{H})/N_1$, which we may apply to the equation $a_{\alpha_1}^{m_1}a_{\alpha_2}^{m_2}\cdots a_{\alpha_k}^{m_k}N_1=N_1$ to obtain $q_{i\#}(a_{\alpha_i})^{m_i}N_1=N_1$. We conclude that $m_i=0$ for all $i\in\{1,2,\cdots,k\}$.

In order to show that $\left<a_\alpha N_1\mid \alpha\in \{1,2\}^\mathbb{N}\right>\cong \bigoplus_{2^{\aleph_0}}\mathbb{Z}$ is a pure subgroup of $\pi_1(\mathbb{H})/N_1$, suppose that  $a_{\alpha_1}^{m_1}a_{\alpha_2}^{m_2}\cdots a_{\alpha_k}^{m_k}N_1=b^nN_1$ for some $\alpha_i\in\{1,2\}^\mathbb{N}$, $m_i\in \mathbb{Z}\setminus\{0\}$, $b\in \pi_1(\mathbb{H})$ and $n\in \mathbb{N}$. Then $q_{i\#}(a_{\alpha_i})^{m_i}N_1=q_{i\#}(b)^nN_1$ for all $i$. As in the proof of the purity of $aN_1$ above, we conclude that $n|m_i$ for all $i$. Hence, $n|a_{\alpha_1}^{m_1}a_{\alpha_2}^{m_2}\cdots a_{\alpha_k}^{m_k}N_1$ in $\pi_1(\mathbb{H})/N_1$.
\end{proof}

\begin{corollary}\label{ST}
 Let $A$ be an abelian group which is Spanier-trivial relative to the Griffiths twin cone $C(\mathbb{H}_o)\vee C(\mathbb{H}_e)$. Then $A$ is cotorsion-free.
  \end{corollary}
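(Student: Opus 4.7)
The plan is to prove the contrapositive: assuming $A$ is not cotorsion-free, I would construct a nontrivial homomorphism $\pi_1(C(\mathbb{H}_o)\vee C(\mathbb{H}_e),\ast)\to A$. Since $\pi^s(C(\mathbb{H}_o)\vee C(\mathbb{H}_e),\ast)=\pi_1(C(\mathbb{H}_o)\vee C(\mathbb{H}_e),\ast)$ (recorded just before Theorem~\ref{mainresult}), any such homomorphism automatically has nontrivial image on the Spanier group, contradicting the hypothesis. Because $A$ is abelian, every homomorphism out of $\pi_1$ factors through $H_1$, so the task reduces to exhibiting a nontrivial homomorphism $H_1(C(\mathbb{H}_o)\vee C(\mathbb{H}_e))\to A$.

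The decisive input will be Theorem~\ref{GH}, which identifies $H_1(C(\mathbb{H}_o)\vee C(\mathbb{H}_e))\cong \bigl(\bigoplus_{2^{\aleph_0}}\mathbb{Q}\bigr)\oplus\bigl(\prod_{p\in\mathbb{P}} A_p\bigr)$ with $A_p\cong\mathbb{J}_p^{\mathbb{N}}$. From this decomposition I read off two families of retractions of $H_1$: projecting the divisible summand onto a single $\mathbb{Q}$ coordinate gives a surjection $H_1\twoheadrightarrow\mathbb{Q}$, and projecting the second summand onto the $p$-th factor $A_p\cong\mathbb{J}_p^{\mathbb{N}}$ and then onto a single $\mathbb{J}_p$ coordinate gives a surjection $H_1\twoheadrightarrow\mathbb{J}_p$ for each prime $p$.

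Next I would invoke the G\"obel--Wald theorem (Theorem~\ref{goebel}) to split the failure of cotorsion-freeness into three cases: (i) $A$ contains a copy of $\mathbb{Q}$; (ii) $A$ contains a copy of $\mathbb{J}_p$ for some prime $p$; or (iii) $A$ has nontrivial torsion, so that $A$ contains a copy of $\mathbb{Z}/p\mathbb{Z}$ for some prime $p$. In case (i) I compose $H_1\twoheadrightarrow\mathbb{Q}\hookrightarrow A$; in case (ii), $H_1\twoheadrightarrow\mathbb{J}_p\hookrightarrow A$; in case (iii), $H_1\twoheadrightarrow\mathbb{J}_p\twoheadrightarrow\mathbb{J}_p/p\mathbb{J}_p\cong\mathbb{Z}/p\mathbb{Z}\hookrightarrow A$. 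Each of the three composites is nontrivial, which is all that is needed.

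There is no substantive obstacle here once the earlier theorems are in hand, and I expect the argument to be very short in the paper: the summands of $H_1$ produced by Theorem~\ref{GH} are precisely the subgroups that Theorem~\ref{goebel} forbids in a cotorsion-free group, while the identification $\pi^s=\pi_1$ for the Griffiths twin cone turns Spanier-triviality into the plain statement that $\mathrm{Hom}(\pi_1,A)=0$. The only minor care needed is the routine verification that each retraction used above is really a direct-summand projection (for the $\mathbb{Q}$ factor, because divisibility makes $\bigoplus_{2^{\aleph_0}}\mathbb{Q}$ split off; for the $A_p$ factor, because it is a factor of a direct product; and for $\mathbb{J}_p$ inside $\mathbb{J}_p^{\mathbb{N}}$, trivially).
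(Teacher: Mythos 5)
Your proposal is correct and is essentially the paper's own argument, merely phrased in contrapositive form: the paper likewise reads off surjections from $\pi^s = \pi_1(C(\mathbb{H}_o)\vee C(\mathbb{H}_e),\ast)$ onto $\mathbb{Q}$, $\mathbb{J}_p$, and $\mathbb{Z}/p\mathbb{Z}$ using Theorem~\ref{GH}, and concludes via Theorem~\ref{goebel} that $A$ can contain none of these subgroups. The case split and the explicit factoring through $H_1$ that you spell out are implicit in the paper's two-sentence proof.
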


  \begin{proof}
By Theorem~\ref{GH}, $\pi^s(C(\mathbb{H}_o)\vee C(\mathbb{H}_e),\ast)=\pi_1(C(\mathbb{H}_o)\vee C(\mathbb{H}_e),\ast)$ can be mapped homomorphically onto $\mathbb{Q}$, $\mathbb{J}_p$ and $\mathbb{Z}/p\mathbb{Z}$ for any prime $p$. It follows that $A$ cannot contain a subgroup isomorphic to any of these groups and is consequently cotorsion-free by Theorem~\ref{goebel}.\end{proof}

\begin{remark}
By \cite[Theorem~1.2]{KarimovRepovs}, the first integral homology group of the Harmonic Archipelago is also isomorphic to the group described in Theorem 4.2 above. Indeed, it can be calculated in much the same way as we computed the first integral homology group of the Griffiths twin cone. We leave the details to the reader.
\end{remark}

\section{The Hawaiian Earring and product properties}\label{hom}

\begin{proposition}\label{onto}
Suppose $A$ is either $\mathbb{Q}$, $\mathbb{Z}/p\mathbb{Z}$ or $\mathbb{J}_p$ for some prime $p$. Then there is a homomorphism $h: \pi_1(\mathbb{H},{\bf o})\rightarrow A$ with $h(\pi({\mathcal U},{\bf o}))=A$ for all ${\mathcal U}\in Cov(\mathbb{H})$.
\end{proposition}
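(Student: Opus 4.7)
The plan is to exhibit $h$ as a composition $\pi_1(\mathbb{H},{\bf o}) \xrightarrow{w} \mathbb{Z}^{\mathbb{N}} \xrightarrow{q} \mathbb{Z}^{\mathbb{N}}/\bigoplus_{\mathbb{N}}\mathbb{Z} \xrightarrow{\pi} A$ of three surjections, the first topological and the last purely algebraic, arranged so that the small-loop content of $\pi({\mathcal U},{\bf o})$ already covers the full middle quotient.

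The starting observation is that for any ${\mathcal U}\in Cov(\mathbb{H})$, whichever $U_0\in {\mathcal U}$ contains the origin $\bf o$ must contain all but finitely many circles $C_k$; pick $N=N({\mathcal U})$ so that $\bigcup_{k\geqslant N}C_k\subseteq U_0$. Then any loop $\gamma$ supported in $\bigcup_{k\geqslant N}C_k$ already defines an element of $\pi({\mathcal U},{\bf o})$ (take $\alpha$ constant at $\bf o$, $U=U_0$, $\beta=\gamma$ in the defining lasso). So the ``tail subgroup'' $T_N\leqslant \pi_1(\mathbb{H},{\bf o})$ generated by such loops is contained in $\pi({\mathcal U},{\bf o})$ for every $\mathcal U$ refining the $N$-th level.

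For the winding-number map, let $r_k:\mathbb{H}\to C_k$ be the natural retraction collapsing all circles other than $C_k$ to $\bf o$, and set $w([\gamma])=(m_k(\gamma))_{k\in\mathbb{N}}$ with $m_k(\gamma)=\deg(r_k\circ \gamma)$. Every sequence $(n_k)\in \mathbb{Z}^\mathbb{N}$ is realized by the infinite concatenation $\ell_1^{n_1}\cdot \ell_2^{n_2}\cdots$ parameterized on shrinking intervals $[1-1/2^{k-1},1-1/2^k]$ (this concatenation is a continuous loop because the diameters of $C_k$ tend to $0$), so $w$ is surjective. The same construction, restricted to circles with index $\geqslant N$, shows that $w(T_N)=\{(n_k)\in \mathbb{Z}^\mathbb{N}\mid n_k=0 \text{ for } k<N\}$. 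Since $\mathbb{Z}^\mathbb{N}=\bigoplus_{\mathbb{N}}\mathbb{Z}+w(T_N)$, the composition $q\circ w$ maps $T_N$, and hence $\pi({\mathcal U},{\bf o})$, onto all of $\mathbb{Z}^\mathbb{N}/\bigoplus_{\mathbb{N}}\mathbb{Z}$.

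For the final surjection $\pi$, invoke Balcerzyk's theorem as quoted in the remark following Theorem~\ref{GH}: $\mathbb{Z}^\mathbb{N}/\bigoplus_{\mathbb{N}}\mathbb{Z}\cong \big(\bigoplus_{2^{\aleph_0}}\mathbb{Q}\big)\oplus \big(\prod_{p\in\mathbb{P}}A_p\big)$ with $A_p\cong (\mathbb{J}_p)^\mathbb{N}$. Projection onto one copy of $\mathbb{Q}$ in the first summand gives a surjection onto $\mathbb{Q}$; projection onto one $\mathbb{J}_p$-factor of $A_p$ gives a surjection onto $\mathbb{J}_p$; composing the latter with the reduction $\mathbb{J}_p\twoheadrightarrow \mathbb{J}_p/p\mathbb{J}_p=\mathbb{Z}/p\mathbb{Z}$ handles the finite case. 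Taking $h=\pi\circ q\circ w$ gives $h(\pi({\mathcal U},{\bf o}))=A$ uniformly in ${\mathcal U}$. There is no real obstacle here once the tail observation is in place; the only thing to be careful about is that the continuity of the infinite concatenation realizing $w$ uses precisely the shrinking of the circles $C_k$, and that the ``direct summand'' surjections onto $\mathbb{Q}$ and $\mathbb{J}_p$ are taken from Balcerzyk's decomposition rather than attempted by an ad hoc formula on $\mathbb{Z}^\mathbb{N}$ (where finitely supported sequences would obstruct a naive $p$-adic summation).
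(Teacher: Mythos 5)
Your proposal is correct, and it does take a genuinely different route from the paper's own argument at the crucial algebraic step. Both proofs begin the same way: the winding-number map $\mu$ (your $w$) onto $\mathbb{Z}^\mathbb{N}$, and the observation that for each $\mathcal U$ one can choose $N$ with $incl_\#(\pi_1(\bigcup_{k\geqslant N}C_k,{\bf o}))\leqslant \pi(\mathcal U,{\bf o})$, which $\mu$ carries onto $\prod_{k\geqslant N}\mathbb{Z}$. Where you diverge is in producing the homomorphism $\mathbb{Z}^\mathbb{N}\rightarrow A$ that is already surjective when restricted to any tail $\prod_{k\geqslant N}\mathbb{Z}$. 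The paper builds such a $\phi$ by hand: it takes a surjection $\tau:\mathbb{Z}^\mathbb{N}\rightarrow\mathbb{J}_p$, uses pure-injectivity of $\mathbb{J}_p$ (or injectivity of $\mathbb{Q}$) to extend $\bigoplus_n\tau$ from $\bigoplus_n\mathbb{Z}^\mathbb{N}$ to $\prod_n\mathbb{Z}^\mathbb{N}$, and then precomposes with the diagonal reindexing $\mathbb{Z}^\mathbb{N}\cong\prod_n\prod_m\mathbb{Z}$ so that every tail still contains a full factor on which $\tau$ is surjective. You instead notice that tail-surjectivity is automatic for any map factoring through $\mathbb{Z}^\mathbb{N}/\bigoplus_\mathbb{N}\mathbb{Z}$, since $\mathbb{Z}^\mathbb{N}=\bigoplus_\mathbb{N}\mathbb{Z}+\prod_{k\geqslant N}\mathbb{Z}$ for every $N$, and then outsource the existence of a surjection from that quotient onto $A$ to Balcerzyk's theorem (which the paper already quotes in the remark after Theorem~\ref{GH}). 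Your route is shorter and conceptually transparent — it makes clear that the ``small-loop content'' is exactly the quotient by the eventually-zero sequences — at the cost of invoking a nontrivial structure theorem as a black box; the paper's extension-by-pure-injectivity argument is more self-contained and elementary, needing only standard facts about injective and pure-injective abelian groups. Both are valid.
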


\begin{proof}
First suppose $A=\mathbb{J}_p$. Consider the surjective homomorphism \[\tau:\prod_{m\in \mathbb{N}}\mathbb{Z}\rightarrow \mathbb{J}_p=\lim_{\leftarrow}(\mathbb{Z}/p\mathbb{Z}\leftarrow \mathbb{Z}/p^2\mathbb{Z}\leftarrow \mathbb{Z}/p^3\mathbb{Z}\leftarrow \cdots)\] given by $\tau(a_1, a_2, a_3, \dots)=([a_1],[a_1+pa_2],[a_1+pa_2+p^2a_3],\dots)$.
Since the direct sum $\bigoplus_{n\in\mathbb{N}} \prod_{m\in\mathbb{N}} \mathbb{Z}$ is a pure subgroup of the direct product $\prod_{n\in\mathbb{N}} \prod_{m\in\mathbb{N}} \mathbb{Z}$ and since $\mathbb{J}_p$ is pure-injective, we can extend $\bigoplus_{n\in \mathbb{N}}\tau: \bigoplus_{n\in\mathbb{N}} \prod_{m\in\mathbb{N}} \mathbb{Z} \rightarrow \mathbb{J}_p$ to a homomorphism $ \sigma:  \prod_{n\in\mathbb{N}} \prod_{m\in\mathbb{N}} \mathbb{Z}\rightarrow \mathbb{J}_p$.
 Choose a bijection $\rho:\mathbb{N}\times \mathbb{N}\rightarrow \mathbb{N}$ such that for every $k\in \mathbb{N}$ there is an $n\in \mathbb{N}$ with $\rho^{-1}(\{1,2,\dots,k\})\cap \left(\{n\}\times \mathbb{N}\right)=\emptyset$, e.g., the diagonal enumeration $\rho(n,m)= n+(n+m-1)(n+m-2)/2$. Consider the isomorphism $\varphi:\prod_{k\in \mathbb{N}}\mathbb{Z}\rightarrow \prod_{n\in\mathbb{N}}\prod_{m\in\mathbb{N}} \mathbb{Z}$, defined by $\varphi((a_k)_{k\in\mathbb{N}})=(a_{\rho(n,m)})_{(n,m)\in\mathbb{N}\times\mathbb{N}}$. Put $\phi=\sigma\circ \varphi:\prod_{k\in \mathbb{N}}\mathbb{Z} \rightarrow \mathbb{J}_p$. Then $\phi(\prod_{k=n}^\infty \mathbb{Z})=\mathbb{J}_p$ for all $n\in \mathbb{N}$. Define $r_k:\mathbb{H}\rightarrow C_k$  by $r_k(x)=x$ if $x\in C_k$ and $r_k(x)={\bf o}$ if $x\not\in C_k$. Let $\mu:\pi_1(\mathbb{H},{\bf o})\rightarrow \prod_{k\in \mathbb{N}} \mathbb{Z}$ be the composition of $(r_{n\#})_{n\in \mathbb{N}}:\pi_1(\mathbb{H},{\bf o})\rightarrow \prod_{k\in\mathbb{N}} \pi_1(C_k,{\bf o})$ and the canonical isomorphism $\prod_{k\in\mathbb{N}} \pi_1(C_k,{\bf o})\cong \prod_{k\in \mathbb{N}} \mathbb{Z}$.
Put $h=\phi\circ \mu: \pi_1(\mathbb{H},{\bf o})\rightarrow \mathbb{J}_p$.  Observe that $\mu(incl_\#(\pi_1(\bigcup_{k=n}^\infty C_k,{\bf o})))= \prod_{k=n}^\infty \mathbb{Z}$ for all $n\in \mathbb{N}$, because $\mu(\ell_1^{a_1}\ell_2^{a_2}\ell_3^{a_3}\cdots) =(a_1,a_2,a_3,\dots)$ for $a_i\in \mathbb{Z}$, where $\ell_i$ is as in the proof of Lemma~\ref{sort}, parametrized appropriately. Let $\mathcal U\in Cov(\mathbb{H})$. Choose $n\in \mathbb{N}$ with $incl_\#(\pi_1(\bigcup_{k=n}^\infty C_k,{\bf o}))\leqslant \pi({\mathcal U},{\bf o})$. Then $\mathbb{J}_p=\phi(\prod_{k=n}^\infty \mathbb{Z}) = \phi \circ \mu(incl_\#(\pi_1(\bigcup_{k=n}^\infty C_k, {\bf o})))\leqslant   \phi \circ \mu(\pi({\mathcal U},{\bf o}))=h(\pi({\mathcal U},{\bf o}))$.

This also covers the case $A=\mathbb{Z}/p\mathbb{Z}$, since there is an epimorphism $\mathbb{J}_p\rightarrow \mathbb{Z}/p\mathbb{Z}$.
If $A=\mathbb{Q}$, we instead start with $\tau:\bigoplus_{k\in \mathbb{N}}\mathbb{Z} \rightarrow \mathbb{Q}$, given by $\tau({\bf e}_k)=1/k$, where ${\bf e}_k(n)=0$ for $n\not=k$ and ${\bf e}_k(k)=1$, and extend it to $\phi:\prod_{k\in \mathbb{N}}\mathbb{Z}\rightarrow \mathbb{Q}$, noting that $\mathbb{Q}$ is injective. Then $\phi(\prod_{k=n}^\infty\mathbb{Z})=\mathbb{Q}$ for all $n\in \mathbb{N}$ and we can proceed as before.
\end{proof}

\begin{corollary}\label{HE}
Let $A$ be an abelian group which is homomorphically Hausdorff relative to the Hawaiian Earring $\mathbb{H}$. Then $A$ is cotorsion-free.
\end{corollary}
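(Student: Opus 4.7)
The plan is to prove the contrapositive: if $A$ is not cotorsion-free, then $A$ fails to be homomorphically Hausdorff relative to $\mathbb{H}$. The proof is essentially an assembly of Theorem~\ref{goebel} and Proposition~\ref{onto}, so the task is to identify an appropriate nontrivial subgroup of $A$ and lift a homomorphism into it up to one into $A$ itself.

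First I would invoke the G\"obel--Wald characterization (Theorem~\ref{goebel}): if $A$ fails to be cotorsion-free, then either $A$ has nontrivial torsion, or $A$ contains a copy of $\mathbb{Q}$, or $A$ contains a copy of $\mathbb{J}_p$ for some prime $p$. In the first case, by decomposing a torsion element of prime-power order, $A$ must contain a subgroup isomorphic to $\mathbb{Z}/p\mathbb{Z}$ for some prime $p$. Thus in every case $A$ contains a subgroup $A_0$ which is isomorphic to one of the three groups handled by Proposition~\ref{onto}.

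Next I would apply Proposition~\ref{onto} to the group $A_0$ to obtain a homomorphism $h_0\colon \pi_1(\mathbb{H},{\bf o})\rightarrow A_0$ with the property that $h_0(\pi({\mathcal U},{\bf o}))=A_0$ for every ${\mathcal U}\in Cov(\mathbb{H})$. Composing with the inclusion $\iota\colon A_0\hookrightarrow A$ produces a homomorphism $h=\iota\circ h_0\colon \pi_1(\mathbb{H},{\bf o})\rightarrow A$, and by construction $h(\pi({\mathcal U},{\bf o}))=\iota(A_0)$ for every ${\mathcal U}\in Cov(\mathbb{H})$.

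It then follows immediately that
\[
\bigcap_{{\mathcal U}\in Cov(\mathbb{H})} h(\pi({\mathcal U},{\bf o}))=\iota(A_0)\neq 0,
\]
which contradicts the assumption that $A$ is homomorphically Hausdorff relative to $\mathbb{H}$. There is no real obstacle here, since the heavy lifting, namely producing a homomorphism whose image on each $\pi({\mathcal U},{\bf o})$ is all of $A_0$, has already been done in Proposition~\ref{onto}; the only small point of care is the case-reduction from ``not cotorsion-free'' to ``contains a $\mathbb{Q}$, $\mathbb{Z}/p\mathbb{Z}$, or $\mathbb{J}_p$ subgroup'' via Theorem~\ref{goebel}.
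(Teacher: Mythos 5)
Your proposal is correct and takes essentially the same approach as the paper: the paper's one-line proof simply says ``Combine Proposition~\ref{onto} with Theorem~\ref{goebel},'' which is precisely what you have spelled out. The only detail the paper leaves implicit, and which you correctly supply, is the reduction of nontrivial torsion to a $\mathbb{Z}/p\mathbb{Z}$ subgroup and the observation that composing $h_0$ with the inclusion $A_0\hookrightarrow A$ keeps the intersection $\bigcap_{\mathcal U} h(\pi(\mathcal U,\mathbf{o}))$ equal to the nonzero subgroup $\iota(A_0)$.
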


\begin{proof}
Combine Proposition~\ref{onto} with Theorem~\ref{goebel}.
\end{proof}

\begin{remark} Implicitly contained in our argument is also a proof of the following characterization, appearing in \cite[\S7]{CC1}, extracted by Dugas, G\"obel,  Wald et al.\@ from Nunke's characterization of slender groups \cite{Nunke}: An abelian group $A$ is cotorsion-free if and only if for every homomorphism $\phi:\mathbb{Z}^\mathbb{N}\rightarrow A$, we have $\bigcap_{n\in \mathbb{N}} \phi(\prod_{k=n}^\infty \mathbb{Z})=0$.
\end{remark}

While n-slenderness is preserved under restricted direct products (and free products) \cite[Theorem~3.6]{Eda1992}, the following holds for arbitrary direct products:

\begin{proposition}\label{prod}
A product $\prod_{i\in I} G_i$ is homomorphically Hausdorff or Spanier-trivial relative to $X$ if and only if each group $G_i$ has the corresponding property.
\end{proposition}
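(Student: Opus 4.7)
My plan is to exploit the formal structure of the direct product: $\prod_{i\in I} G_i$ comes equipped with coordinate projections $\pi_i\colon\prod_{j\in I} G_j\rightarrow G_i$ and (injective) coordinate inclusions $\iota_i\colon G_i\rightarrow \prod_{j\in I} G_j$, the latter sending $g$ to the tuple with $g$ in slot $i$ and the identity in all other slots. Both properties under discussion are stated purely in terms of image subgroups of homomorphisms out of $\pi_1(X,x)$ (namely, triviality of $\bigcap_{{\mathcal U}\in Cov(X)}h(\pi({\mathcal U},x))$ or of $h(\pi^s(X,x))$), and hence should transfer along the inclusions and projections by formal diagram chasing. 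The argument handles both properties simultaneously.

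For the ``only if'' direction, I would fix an index $i$ and an arbitrary homomorphism $h_i\colon\pi_1(X,x)\rightarrow G_i$, and apply the hypothesis to the composition $\iota_i\circ h_i\colon \pi_1(X,x)\rightarrow\prod_{j\in I} G_j$. In the homomorphically Hausdorff case this yields $\bigcap_{\mathcal U}\iota_i(h_i(\pi({\mathcal U},x)))=1$; in the Spanier-trivial case it yields $\iota_i(h_i(\pi^s(X,x)))=1$. Either way, since $\iota_i$ is injective, the conclusion descends immediately to $h_i$.

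For the ``if'' direction, I would fix an arbitrary homomorphism $h\colon\pi_1(X,x)\rightarrow \prod_{i\in I} G_i$ and consider the projected homomorphisms $h_i=\pi_i\circ h$, each of which enjoys the assumed property by hypothesis on $G_i$. For Spanier-triviality, every coordinate of $h(\pi^s(X,x))$ vanishes, so $h(\pi^s(X,x))=1$. For homomorphic Hausdorffness, any $g=(g_i)\in\bigcap_{\mathcal U} h(\pi({\mathcal U},x))$ projects to $g_i\in\bigcap_{\mathcal U} h_i(\pi({\mathcal U},x))=\{1\}$ for every $i$, and hence $g=1$.

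I anticipate no real obstacle: the entire proof reduces to tracking images through the injection/projection pair of the product, with no appeal to the deeper structure of Peano continua or of the Spanier subgroup. The one small point worth noting is that in the ``if'' direction for the homomorphically Hausdorff case, the passage from coordinate-wise triviality to triviality in the product uses precisely the fact that $\bigcap_{i\in I} \ker \pi_i = \{1\}$ inside $\prod_{i\in I} G_i$.
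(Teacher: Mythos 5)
Your proof is correct and follows essentially the same route as the paper: for the ``if'' direction the paper also composes $h$ with the coordinate projections $p_j$ and uses $\bigcap_j \ker(p_j)=1$, and for the (stated as obvious) ``only if'' direction you simply spell out the standard composition with the coordinate inclusions. No gaps.
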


\begin{proof}
First suppose that each $G_i$ is homomorphically Hausdorff relative to $X$. Let $h:\pi_1(X,x)\rightarrow \prod_{i\in I} G_i$ be a homomorphism and consider the projections $p_j: \prod_{i\in I} G_i\rightarrow G_j$. Then $p_j(\bigcap_{{\mathcal U}\in Cov(X)} h(\pi({\mathcal U},x)))\leqslant \bigcap_{{\mathcal U}\in Cov(X)} p_j\circ h(\pi({\mathcal U},x))=1$ for every $j\in I$, so that $\bigcap_{{\mathcal U}\in Cov(X)} h(\pi({\mathcal U},x))\leqslant \bigcap_{j\in I} ker(p_j)=1$. The argument is analogous if each $G_i$ is Spanier-trivial relative to $X$. The converse is obvious.
\end{proof}

Let us call a group $G$ {\em residually n-slender} if for every $g\in G\setminus\{1\}$ there is a homomorphism $h:G\rightarrow S$ to an n-slender group $S$ such that $h(g)\not=1$. (See \cite{CE}.)

\begin{corollary}\label{residually}
Every residually n-slender group is homomorphically Hausdorff relative to every Peano continuum.
\end{corollary}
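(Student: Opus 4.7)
My plan is to argue by contradiction, exploiting the characterization of n-slenderness from \cite{Eda2005} that was recalled in the introduction: a group $S$ is n-slender if and only if for every Peano continuum $X$ and every homomorphism $h':\pi_1(X,x)\to S$ there is an open cover $\mathcal{U}$ of $X$ with $h'(\pi(\mathcal{U},x))=1$.

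Concretely, let $G$ be residually n-slender, let $X$ be a Peano continuum, and let $\phi:\pi_1(X,x)\to G$ be a homomorphism. Suppose, toward a contradiction, that there is some nontrivial element $g\in\bigcap_{\mathcal{U}\in Cov(X)}\phi(\pi(\mathcal{U},x))$. By the definition of residual n-slenderness, choose a homomorphism $h:G\to S$ into an n-slender group $S$ with $h(g)\neq 1$. Then $h\circ\phi:\pi_1(X,x)\to S$ is a homomorphism from the fundamental group of a Peano continuum into an n-slender group, so by the Eda 2005 characterization there exists some $\mathcal{U}_0\in Cov(X)$ with $(h\circ\phi)(\pi(\mathcal{U}_0,x))=1$.

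On the other hand, $g$ lies in $\phi(\pi(\mathcal{U}_0,x))$ by hypothesis, so $h(g)\in h(\phi(\pi(\mathcal{U}_0,x)))=1$, contradicting $h(g)\neq 1$. This forces $\bigcap_{\mathcal{U}\in Cov(X)}\phi(\pi(\mathcal{U},x))=1$, establishing that $G$ is homomorphically Hausdorff relative to $X$.

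There is no real obstacle here: the argument is a direct two-step application of the definitions combined with the already-quoted characterization of n-slenderness in terms of Peano continua. The only point worth flagging is that one must invoke the full Peano-continuum form of n-slenderness (not just its restriction to the Hawaiian Earring), since we need the open cover to live in the given $X$; this is precisely what the \cite{Eda2005} characterization supplies.
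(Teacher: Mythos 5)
Your proof is correct. The paper instead cites the characterization of residually n-slender groups as subgroups of direct products of n-slender groups and invokes Proposition~\ref{prod}; but when one unwinds that route, the projection $p_j$ there plays precisely the role your separating homomorphism $h:G\to S$ plays, so the two arguments are mathematically the same idea, packaged differently. Your direct contradiction argument is a bit more self-contained (it avoids Proposition~\ref{prod} and the unstated but easy observation that subgroups inherit the homomorphically Hausdorff property), while the paper's version is more modular since Proposition~\ref{prod} is wanted elsewhere anyway.
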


\begin{proof}
Since a group is residually n-slender if and only if it is isomorphic to a subgroup of a direct product of n-slender groups, this follows from Proposition~\ref{prod}.
\end{proof}

\begin{remark} Since every residually free group is, in particular, residually n-slender, Corollary~\ref{residually} applies to the following examples of fundamental groups:

If $X$ is a compact one-dimensional metric space or a proper compact subset of a surface, then $\pi_1(X,\ast)$ is isomorphic to a subgroup of a direct product of free groups of finite rank \cite{CF,FZ2005} and thus residually free.

 Suppose $Y=\prod_{n\in \mathbb{N}} Y_n$ is a product of countably many one-dimensional Peano continua $Y_n$, each of which is not semilocally simply-connected at any point. Then  $\pi_1(Y,\ast)\cong \prod_{n\in\mathbb{N}} \pi_1(Y_n,\ast)$ is a residually free group from whose isomorphism type one can recover the space $Y$  by a construction described in \cite{CE}. This construction, in turn, makes use of the fact that each $\pi_1(Y_n,\ast)$ is residually n-slender. Examples for $Y_n$ include the Menger curve, the Sierpi\'{n}ski curve, and the Sierpi\'{n}ski triangle.

Lastly, if $Z$ is a well-balanced tree of surfaces, then $\pi_1(Z,\ast)$ is isomorphic to a subgroup of a direct product of fundamental groups of closed surfaces \cite{FG} each of which is residually free. (Note that, with the exception of the three non-orientable surfaces of smallest genus, surface groups are fully residually free. See, for example, \cite[Lemma~5.5.11]{Chiswell}.) Hence, $\pi_1(Z,\ast)$ is residually free. Examples for $Z$ include the Pontryagin sphere (a densely iterated connected sum of tori) and the Pontryagin surface $\Pi_2$ (a densely iterated connected sum of real projective planes).
\end{remark}

\begin{remark} It has recently been shown that certain amalgamated free products and certain HNN extensions of n-slender groups are n-slender, among them the Baumslag-Solitar groups \cite{Nakamura}.
\end{remark}

\noindent {\bf Acknowledgements.} This research was partially supported by the Grant-in-Aid for Scientific Research (C) of Japan (No. 20540097 and 23540110 to Katsuya Eda) and by a grant from the Simons Foundation (No. 245042 to Hanspeter Fischer).

\end{document}